\documentclass[a4paper,10pt]{article}

\pdfoutput=1 

\usepackage{geometry}
\usepackage{empheq}
\usepackage{amsmath,amsfonts,amssymb,amsthm,mathtools,tikz,enumitem}
\usepackage{caption}
\usepackage{graphicx}
\usepackage{float}
\usepackage{subcaption}
\usepackage{microtype}
\usepackage{url,hyperref}
\usepackage{indentfirst}
\hypersetup{colorlinks, linkcolor={red}, citecolor={blue}, urlcolor={black}}
\usepackage{cleveref}

\usepackage[T1]{fontenc}
\usetikzlibrary{decorations.pathreplacing,calc}

\allowdisplaybreaks

\newtheorem{theorem}{Theorem}[section]
\newtheorem{lemma}[theorem]{Lemma}
\newtheorem{corollary}[theorem]{Corollary}

\newtheorem{claim}[theorem]{Claim}
\newtheorem{problem}[theorem]{Problem}

\theoremstyle{definition}

\crefname{theorem}{Theorem}{Theorems}
\Crefname{theorem}{Theorem}{Theorems}
\crefname{lemma}{Lemma}{Lemmas}
\Crefname{lemma}{Lemma}{Lemmas}
\crefname{corollary}{Corollary}{Corollaries}
\Crefname{corollary}{Corollary}{Corollaries}
\crefname{claim}{Claim}{Claims}
\Crefname{claim}{Claim}{Claims}
\Crefname{problem}{Problem}{Problems}

\newcommand{\dd}{\mathrm{d}}
\newcommand{\GG}{\mathcal{G}}
\newcommand{\bA}{\mathbf{A}}

\begin{document}

\title{On clique-to-clique densities}
\date{}

\author{
Jie Ma\thanks{School of Mathematical Sciences, University of Science and Technology of China, Hefei, Anhui 230026, China.} \(^{,}\)\thanks{Yau Mathematical Sciences Center, Tsinghua University, Beijing 100084, China.}
\and
Tianhen Wang\footnotemark[1]
\and 
Tianming Zhu\footnotemark[1]
}

\maketitle

\begingroup
\renewcommand\thefootnote{}
\footnotetext{Emails:~~~\texttt{jiema@ustc.edu.cn};~~~\texttt{wth1115060377@mail.ustc.edu.cn};~~~\texttt{zhutianming@mail.ustc.edu.cn}.}
\endgroup

\begin{abstract}
Let \(k_r(G)\) denote the number of \(r\)-cliques in a graph \(G\) and let \(F_r(\cdot)\) be the Lov\'asz--Simonovits \(r\)-clique density function. For any integers \(2\le s<t\), we determine the asymptotically sharp lower bound on \(k_t(G)\) in an \(n\)-vertex graph \(G\) with a prescribed number \(k_s(G)\), by showing that
\[
\frac{k_t(G)}{n^t}\ge F_t\!\left(F_s^{-1}\!\left(\frac{k_s(G)}{n^s}\right)\right),
\]
where \(F_s^{-1}\) denotes the generalized inverse.
This strengthens Bollob\'as's piecewise-linear interpolation bound and, in the case \(s=2\), recovers Reiher's clique density theorem via a new inductive proof.
\end{abstract}

\section{Introduction}\label{SEC:Intro}
\noindent 
For an integer \(r \ge 2\) and a graph \(G\), let \(k_r(G)\) denote the number of copies of the \(r\)-clique \(K_r\) in \(G\). In this paper, we investigate extremal problems concerning the minimization of the \(r\)-clique density \(p_r(G) \coloneqq k_r(G) / |V(G)|^r\) over graphs \(G\) with prescribed density conditions.

A cornerstone of extremal graph theory is Tur\'an's theorem~\cite{turan1941extremal}, which states that for any integers \(n\ge r>2\), the maximum number \(k_2(G)\) of edges among all \(n\)-vertex \(K_r\)-free graphs \(G\) (i.e., containing no copy of \(K_r\)) is attained by the Tur\'an graph \(T_{r-1}(n)\), the balanced complete \((r-1)\)-partite graph on \(n\) vertices. Generalizations of this classical result have played a central role in the development of extremal graph theory. In particular, Zykov~\cite{zykov1949linear} and, independently, Erd\H{o}s~\cite{erdos1962number} proved that for any integers \(n\ge r>s\ge2\), the maximum number \(k_s(G)\) among all \(n\)-vertex \(K_r\)-free graphs \(G\) is likewise attained by \(T_{r-1}(n)\). This result initiated the study of generalized Tur\'an numbers (see, e.g., \cite{alon2016}).
These theorems imply that if \(k_2(G)>k_2(T_{r-1}(n))\), or more generally if \(k_s(G)>k_s(T_{r-1}(n))\), then \(G\) necessarily contains a copy of \(K_r\). 
This naturally leads to the corresponding supersaturation problem: given the number of copies of \(K_s\) in a graph \(G\), what is the minimum possible number of copies of \(K_r\)?

We refer to this as the \(K_s\to K_r\) problem. It has been studied extensively in the literature, with particular emphasis on the case \(s=2\).
For integers \(p, r\ge 2\), we define the \emph{critical point}
\begin{equation}\label{eq:theta-intro}
    \theta_{p,r}\coloneqq \frac{\binom{p+1}{r}}{(p+1)^r}
    =\lim_{n\to\infty}p_r\big(T_{p+1}(n)\big),
\end{equation}
which is the limiting \(r\)-clique density of the Tur\'an graphs \(T_{p+1}(n)\).
Following earlier results on triangles and \(K_4\) in~\cite{goodman1959sets,moon1962problem,nordhaus1963triangles},
Khad\v{z}iivanov and Nikiforov~\cite{Khadzhiivanov1978} proved that for any integer $r\ge 2$ and any graph \(G\) with edge-density \(\gamma\), if \(\gamma\ge \frac{r-2}{2(r-1)},\)
then the \(r\)-clique density of \(G\) satisfies
\(p_r(G)\ge \frac{1}{r!}\prod_{j=2}^r \bigl(2(j-1)\gamma-(j-2)\bigr).\)
At each critical point \(\gamma=\theta_{p,2}\) for \(p\ge r-2\), this bound becomes \(p_r(G)\ge \theta_{p,r}\) and is attained asymptotically by the corresponding Tur\'an graphs.
It is worth noting that the right-hand side is a convex function of \(\gamma\) on \([\frac{r-2}{2(r-1)},1/2)\).

In~\cite{bollobas1976complete}, Bollob\'as used an elegant optimization argument to obtain the following far-reaching strengthening. 
For any integers $r>s\ge 2$ and any graph \(G\) with \(p_s(G)=\gamma\), 
the \(r\)-clique density of \(G\) satisfies \(p_r(G)\geq L_{s,r}(\gamma)\), where \(L_{s,r}(\gamma)\colon [0,1/s!)\to[0,1/r!)\) denotes the unique piecewise-linear function satisfying
\(L_{s,r}(0)=0\) and \(L_{s,r}(\theta_{p,s})=\theta_{p,r}\)
for every integer \(p\ge r-2\).
Evidently, this bound is optimal whenever \(\gamma=\theta_{p,s}\) for integers \(p\ge r-2\), and in the case \(s=2\), it improves the Khad\v{z}iivanov--Nikiforov bound at every non-critical point.

In their seminal work~\cite{lovasz1983number}, Lov\'asz and Simonovits formulated the celebrated clique density conjecture for the \(K_2\to K_r\) problem. It asserts that, for every edge density \(\gamma\), the minimum possible \(r\)-clique density is asymptotically attained by a complete multipartite graph in which all but at most one vertex class have the same size, while the remaining class may be smaller.
More precisely, for each \(\gamma\in(0,1/2)\), let \(p\in\mathbb{N}_{\ge1}\) and \(\alpha\in[0,1/p)\) be the unique parameters satisfying
\(\gamma=\frac{p}{2(p+1)}(1-\alpha^2)\),
and let \(K_{\gamma,n}\) denote the \(n\)-vertex complete \((p+1)\)-partite graph with \(p\) vertex classes of size
\(\frac{(1+\alpha)n}{p+1}\) and one vertex class of size \(\frac{(1-p\alpha)n}{p+1}\).
For \(\gamma=0\), let \(K_{0,n}\) be the empty graph on \(n\) vertices. Define
\(F_r(\gamma)\coloneqq\lim_{n\to\infty}p_r(K_{\gamma,n})\) for \(\gamma\in[0,1/2)\).
The clique density conjecture of Lov\'asz--Simonovits~\cite{lovasz1983number} then states that for every \(r\ge 3\) and every graph \(G\), \(p_r(G)\ge F_r\big(p_2(G)\big)\).

Before discussing subsequent developments, we pause to examine the function \(F_r\colon [0,1/2)\to[0,1/r!)\).
Clearly, \(F_2(\gamma)=\gamma\). Moreover, for every \(r\ge3\), \(F_r(0)=0\), and a direct calculation yields
\begin{equation}\label{eq:F-def}
F_r(\gamma)
=\frac{\binom{p+1}{r}}{(p+1)^r}
(1+\alpha)^{r-1}\bigl(1-(r-1)\alpha\bigr)
\quad \mbox{for} \quad \gamma\in(0,1/2).
\end{equation}
The function \(F_r\) vanishes identically on \([0,\frac{r-2}{2(r-1)}]\),\footnote{Indeed, if \(p\le r-2\), then \(\binom{p+1}{r}=0\), and hence \(F_r(\gamma)=0\).} is continuous and strictly increasing on \([\frac{r-2}{2(r-1)},1/2)\), and is piecewise concave on each interval between consecutive critical points \(\theta_{p,2}\) and \(\theta_{p+1,2}\), where \(p\ge r-2\); see Figure~\ref{fig:Curve} (with \(s=2\)).
Finally, we define the inverse function
\(F_r^{-1}\colon [0,1/r!)\to[0,1/2)\)
by setting
\(F_r^{-1}(0)\coloneqq\frac{r-2}{2(r-1)}\),
and, for \(y\in(0,1/r!)\), letting \(F_r^{-1}(y)\) denote the inverse of \(F_r\) on its strictly increasing part.

Regarded as one of the central problems in extremal graph theory, the clique density conjecture has received considerable attention since its formulation.
Lov\'asz and Simonovits~\cite{lovasz1983number} proved it in a small right neighborhood of every critical point: for every fixed integer \(r\ge3\) and every integer \(p\ge r-2\), there exists \(\varepsilon_p>0\) such that the conjectured lower bound holds whenever
\(\theta_{p,2}\le p_2(G)\le\theta_{p,2}+\varepsilon_p\). 
The first non-trivial interval \(1/4\le p_2(G)\le 1/3\)
for the case \(r=3\) was established by Fisher~\cite{fisher1989lower} and later revisited by Razborov~\cite{razborov2007flag} as one of the first applications of the flag algebra method.
Shortly thereafter, Razborov~\cite{razborov2008minimal} completely resolved the case \(r=3\), again using the flag algebra method.
Nikiforov~\cite{nikiforov2011number} subsequently introduced a weighted graph analytic approach, which yielded a new proof for the case \(r=3\) and established the case \(r=4\). Finally, in a major breakthrough, Reiher~\cite{reiher2016clique} built on this weighted graph framework to prove the clique density conjecture for every \(r\ge3\).

\begin{theorem}[Reiher~\cite{reiher2016clique}, The clique density theorem]\label{THM:Reiher}
Let \(r\ge 3\) be an integer. Then for every graph \(G\),
\[
        p_r(G)\ge F_r(p_2(G)).
\]
\end{theorem}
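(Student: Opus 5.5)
We will prove \Cref{THM:Reiher} by induction on \(r\), passing through the more general \(K_s\to K_t\) statement announced in the abstract. The base case \(r=3\) (or, formally, \(r=2\), where \(F_2\) is the identity) is trivial. For the inductive step we relate \(k_{r}(G)\) to \(k_{r-1}(G)\) through a single comparison between \(F_{r-1}\) and \(F_r\) along the extremal family. Concretely, we work in the weighted-graph (Motzkin--Straus type) framework of Nikiforov and Reiher. Fix \(n\) and a vertex-weighting \(x\) on \(G\), and define
\[
\lambda_s(x)=\sum_{S\in \binom{V}{s},\, S\text{ clique}}\prod_{v\in S}x_v .
\]
We then minimize \(\lambda_r\) subject to \(\lambda_2=\gamma\) and \(\sum x_v=1\). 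It suffices to treat weightings of \(G\), since blow-ups of \(G\) reduce the graph statement to the weighted one.

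\textbf{Step 1 (Lagrange conditions).} At a minimizer with minimal support, the first-order conditions give, for each vertex \(v\) of the support,
\[
\partial_v\lambda_r = \mu\,\partial_v\lambda_2+\nu .
\]
Here \(\partial_v\lambda_s\) is the weighted count of \((s-1)\)-cliques in \(N(v)\). Summing these identities weighted by \(x_v\) yields
\[
r\lambda_r=2\mu\lambda_2+\nu .
\]
We also use the standard symmetrization/merging argument: two nonadjacent support vertices may be merged, so the support can be assumed to be a clique with weights, i.e.\ a complete multipartite structure. This last claim is the delicate part of Reiher's method and cannot be taken for granted. Instead we plan to apply the inductive hypothesis locally inside neighbourhoods. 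For each \(v\), the link \(G[N(v)]\) with weights \(x|_{N(v)}/d_v\), where \(d_v=\sum_{u\sim v}x_u\), satisfies
\[
\frac{\partial_v\lambda_r}{d_v^{\,r-1}}\ \ge\ F_{r-1}\!\left(\frac{\partial_v\lambda_2}{d_v^{2}}\right)
\]
by induction on \(r\).

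\textbf{Step 2 (averaging).} We sum the local bounds with weights \(x_v\) to get
\[
r\lambda_r\ \ge\ \sum_v x_v\, d_v^{\,r-1}\,F_{r-1}\!\left(\frac{e_v}{d_v^2}\right),
\]
where \(e_v\) is the weighted edge count inside \(N(v)\). These quantities satisfy \(\sum_v x_v d_v=2\gamma\) and \(\sum_v x_v e_v=3\lambda_3\). To close the induction we need a convexity-type inequality for the two-variable function
\[
\Phi(d,e)=d^{r-1}F_{r-1}(e/d^2),
\]
which lets us replace the average by its value at the ``balanced'' point coming from \(K_{\gamma,n}\). The trouble is that \(F_{r-1}\) is only piecewise concave, so Jensen fails directly. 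This is where the Lagrange conditions enter: the multiplier identity from Step 1 pins each \(\partial_v\lambda_r\) to an affine function of \(\partial_v\lambda_2\). It therefore suffices to check a tangent-line inequality: \(\Phi\) lies above a suitable supporting affine function, namely the tangent at the extremal parameters \((p,\alpha)\) of \(\gamma\). Verifying that tangent inequality is a one-variable calculus statement about \(F_{r-1}\) on each interval \([\theta_{p,2},\theta_{p+1,2}]\), using the explicit formula \eqref{eq:F-def}.

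\textbf{Main obstacle.} The hard step is the tangent inequality in Step 2 in the presence of piecewise concavity. Local neighbourhood densities \(e_v/d_v^2\) may fall in a different critical interval than the global one, and then the tangent at \((p,\alpha)\) need not lie below \(\Phi\). Our first attempt will be to use the minimal-support reduction to show that every vertex has a neighbourhood of density in the \((p-1)\)-st interval. This follows Reiher's observation that in an optimal weighting all links ``look like'' \(K_{\gamma',n}\) with one fewer part. With that in hand, the inequality becomes the monotonicity/convexity of an explicit polynomial in \(\alpha\), which we will verify directly.
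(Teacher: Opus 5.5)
Your induction neither starts nor closes.

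\textbf{The base case.} The case $r=3$ is not trivial. It is Razborov's triangle theorem, which the paper does not reprove but imports, in Nikiforov's weighted form, as its base case (Theorem~\ref{THM:Nikiforov-weighted}). Your step cannot generate it from $r=2$ either. Since $F_2$ is the identity and $\partial_v\lambda_3=e_v$, your local bound at $r=3$ is the tautology $e_v/d_v^2\ge e_v/d_v^2$.

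\textbf{The inductive step for $r\ge4$.} Applied in the link of $v$, the inductive hypothesis is an edge-to-$K_{r-1}$ statement there, i.e.\ a triangle-to-$K_r$ statement in $G$. It brings in $e_v=\partial_v\lambda_3$, which appears neither in your constraint nor in your Lagrange identity $\partial_v\lambda_r=\mu d_v+\nu$. So any tangent-plane step leaves you with $r\lambda_r\ge A+2B\gamma+3C\lambda_3$ and an uncontrolled $\lambda_3$. Reiher handles this with a separate Razborov-type double-counting estimate in which the $\GG(K_3)$-terms cancel; your plan has no counterpart.

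Feeding in $\lambda_3\ge F_3(\gamma)$ from the inductive hypothesis does not rescue it, because the tangent inequality you need is false.
\begin{itemize}
\item Your chain is a direct lower bound at an arbitrary minimizer. The extremal weighting of $K_{p+1}$ (weight $\frac{1+\alpha}{p+1}$ on $p$ vertices and $\frac{1-p\alpha}{p+1}$ on one, with $p\ge r-1$ and $0<\alpha<1/p$) is such a minimizer.
\item Hence every step must be tight there. In particular, your affine minorant must touch your $\Phi$ at $(d_v,e_v)$ for a heavy vertex $v$.
\item The normalized link of such a $v$ is the extremal configuration with $p$ classes and parameter $\alpha'=\frac{(p+1)\alpha}{p-\alpha}\in\bigl(0,\frac{1}{p-1}\bigr)$. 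This is a non-critical point.
\item There $e\mapsto\Phi(d_v,e)$ is strictly concave, since for $r-1\ge3$ the function $F_{r-1}$ is strictly concave on each piece.
\item A strictly concave function minus an affine one cannot have an interior minimum, so no such touching minorant exists.
\end{itemize}
Finally, the claim that all links of an optimal weighting lie in one prescribed interval is not something Reiher proves. It also rests on the symmetrization you yourself concede is unavailable.

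\textbf{The missing idea.} It is the one your first sentence announces and then never uses: induct on the clique-lifting statement $\GG(K_{r+1})\ge\Phi_r(\GG(K_r))$ (Theorem~\ref{THM:clique-lifting}) instead of the edge-to-clique one.
\begin{itemize}
\item In the neighbourhood graph $\GG^{(i)}$ the inductive hypothesis then relates exactly $\GG_i(K_{r-1})$ and $\GG_i(K_r)$. These are the two quantities in the vertex Lagrange condition $\GG_i(K_r)=\lambda\GG_i(K_{r-1})-\mu$, so the only free local quantity is the degree $\rho_i$.
\item The envelope Lemma~\ref{LEM:envelope} and the monotonicity in Lemma~\ref{LEM:ratio} turn the assumption $M>1$ into a lower bound on $\rho_i$ that is affine in $\eta_i$ (Claim~\ref{CLAIM:local-general}). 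So averaging is exact and no convexity is needed.
\item Summing gives a lower bound on $2\GG(K_2)$. This contradicts $\GG(K_2)\le F_r^{-1}(\GG(K_r))$, which comes from iterating the inductive hypothesis downwards.
\end{itemize}
Reiher's theorem is then the composite of the lifting steps $K_2\to K_3\to\cdots\to K_r$, starting from Nikiforov's triangle theorem.
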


In this paper, we determine the asymptotically sharp lower bound on \(k_t(G)\) for graphs \(G\) with prescribed \(k_s(G)\), thereby resolving the general \(K_s\to K_t\) problem for all integers \(t>s\ge2\). Our main result is as follows.

\begin{theorem}\label{THM:ordinary-main}
Let \(t>s\ge 2\) be integers. Then for every graph \(G\),
\[
        p_t(G)\ge F_t\bigl(F_s^{-1}(p_s(G))\bigr).
\]
\end{theorem}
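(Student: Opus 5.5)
We will prove \Cref{THM:ordinary-main} by induction on \(s\), taking the case \(s=2\) to be \Cref{THM:Reiher}. The abstract indicates the authors may instead derive the \(s=2\) case from their own induction. For our plan we simply assume \Cref{THM:Reiher}.

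The key reduction is that it suffices to treat \(t=s+1\). Indeed, write \(g_{s}\coloneqq F_{s+1}\circ F_s^{-1}\). On the range where \(F_s\) is strictly increasing, the composition telescopes:
\[
F_t\circ F_s^{-1}=g_{t-1}\circ\cdots\circ g_s .
\]
Each \(g_j\) is nondecreasing. So the chain \(p_{j+1}(G)\ge g_j(p_j(G))\) for \(j=s,\dots,t-1\) yields the claim. One must check the degenerate region \(p_s(G)=0\), where \(F_s^{-1}(0)=\frac{s-2}{2(s-1)}\) and \(F_t\) of this value is \(0\), so the bound is trivial there.

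For the step \(s\to s+1\), the natural tool is vertex links. Let \(G_v\) be the graph induced on \(N(v)\). Then \(\sum_v k_{s-1}(G_v)=s\,k_s(G)\) and \(\sum_v k_s(G_v)=(s+1)k_{s+1}(G)\). Applying the induction hypothesis (the \(s-1\to s\) statement) inside each link gives
\[
k_s(G_v)\ge |G_v|^s\,g_{s-1}\bigl(k_{s-1}(G_v)/|G_v|^{s-1}\bigr).
\]
Summing over \(v\) then requires a convexity-type inequality to aggregate these bounds. Since \(g_{s-1}\) is not convex, this naive averaging fails. We would therefore follow Reiher's weighted-graph approach instead. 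Pass to the continuous problem on vertex-weighted graphs (blow-ups), and fix an optimal weighting \(x\) that minimizes
\[
\lambda_{s+1}(x)-\mu\,\lambda_s(x)
\]
for a suitable Lagrange multiplier \(\mu\). Then derive the first-order (KKT) conditions: for every vertex \(v\) in the support, the partial derivatives \(\partial_v\lambda_{s+1}\) and \(\partial_v\lambda_s\) are affinely related. Together with the symmetrization (Zykov-type) observation that nonadjacent vertices may be merged, this reduces the extremal configurations to complete multipartite weightings. For these, an explicit calculus check shows that the optimal multipartite graph has all parts equal except one smaller part, exactly the graphs \(K_{\gamma,n}\). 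That recovers \(F_{s+1}(F_s^{-1}(\cdot))\).

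The main obstacle is the reduction to complete multipartite graphs. Zykov symmetrization alone does not work, because minimizing \(\lambda_{s+1}\) at fixed \(\lambda_s\) is not monotone under merging twins. We would first try to transplant Reiher's argument. That argument combines the Lagrangian conditions with a clever inequality relating \(\partial_v\lambda_{s+1}\), \(\partial_v\lambda_s\) and the induction hypothesis applied to links. The aim is to prove that for an optimal weighting, the local quantity \(\partial_v\lambda_s\) is forced to lie in a regime where the link bound is tight. A fallback is a direct interpolation argument: show that \(p_{s+1}\ge g_s(p_s)\) holds at each critical point \(\theta_{p,s}\), and in a concave-piece neighbourhood via a Lovász–Simonovits-style perturbation, then extend along each interval using the inductive link inequality.
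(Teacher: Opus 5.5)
Your reduction to the one-step lifting \(t=s+1\) via the telescoping \(F_t\circ F_s^{-1}=\Phi_{t-1}\circ\cdots\circ\Phi_s\) (your \(g_j\) is the paper's \(\Phi_j\)), including the degenerate case \(p_s(G)=0\), is exactly how the paper derives the theorem from \Cref{THM:clique-lifting}. An induction on the clique size with the triangle case as base is also the paper's framework.

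\textbf{The gap.} The inductive step \(p_{s+1}\ge\Phi_s(p_s)\), which is the entire content of the theorem, is never proved.
\begin{itemize}
\item The route you actually describe (first-order conditions plus Zykov-type merging to reach complete multipartite weightings) is one you yourself concede does not work.
\item ``Transplant Reiher's argument'' and ``interpolate from the critical points along each interval'' name strategies rather than supply arguments. A perturbation near \(\theta_{p,s}\) is the analogue of what Lov\'asz and Simonovits already had for \(s=2\); pushing it across a whole concave piece is precisely the difficulty.
\item If you literally take a global minimizer of the linear functional \(\lambda_{s+1}-\mu\lambda_s\), you only detect supporting lines of the achievable region of pairs \((p_s,p_{s+1})\). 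Since \(\Phi_s\) is concave between consecutive critical points, such minimizers occur only at the breakpoints of Bollob\'as's interpolation, so this can at best return his piecewise-linear bound.
\item You correctly note that averaging the link inequalities fails because \(\Phi_{s-1}\) is not convex, but you give nothing to replace that averaging.
\end{itemize}

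\textbf{What the paper does instead} (its \(r\) plays the role of your \(s\)). It takes a minimal-order counterexample \(\GG\) maximizing the \emph{nonlinear} defect \(\Phi_r(\GG(K_r))-\GG(K_{r+1})\). The critical values \(\GG(K_r)=\theta_{p,r}\), where \(\Phi_r\) is not differentiable, are excluded beforehand by Lemma~\ref{LEM:critical-values}, via the quadratic inequality of Lemma~\ref{LEM:quadratic-MM}.
\begin{itemize}
\item Stationarity in the vertex weights gives \(\GG_i(K_r)=\lambda\GG_i(K_{r-1})-\mu\) with \(\lambda=\Phi_r'(\GG(K_r))\). The failure of the bound is encoded as \(M=\mu/\mu_0>1\).
\item Stationarity in the edge weights gives \(1\le\eta_i\le r\) for \(\eta_i=\lambda\GG_i(K_{r-1})/\mu\).
\item The induction hypothesis is applied to the weighted link \(\GG^{(i)}\), whose vertex weights are normalized by the weighted degree \(\rho_i\). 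This yields \(\GG_i(K_r)\ge\rho_i^{\,r}\,\Phi_{r-1}\bigl(\GG_i(K_{r-1})/\rho_i^{\,r-1}\bigr)\).
\item Instead of being averaged, this is turned into a pointwise lower bound \(\rho_i\ge\tau_i\) on every weighted degree. Two analytic facts do this: \(u\mapsto\Phi_{r-1}(u)/u^{r/(r-1)}\) is nondecreasing (Lemma~\ref{LEM:ratio}), and the envelope Lemma~\ref{LEM:envelope} shows the analytically extended curve \(\bigl(P_{r-1}(p,\beta),P_r(p,\beta)\bigr)\) stays on or below the graph of \(\Phi_{r-1}\) for all \(\beta\in[0,1/(r-1)]\).
\item Summing \(x_i\rho_i\) bounds \(2\GG(K_2)\) from below. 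The lower-level liftings (or Reiher's theorem, which you are assuming anyway) give \(\GG(K_2)\le F_r^{-1}(\GG(K_r))=\gamma\).
\item After cancellation these force \(h(M)\le h(1)\) for a function \(h\) that is strictly increasing on \([1,\infty)\), contradicting \(M>1\).
\end{itemize}
So the ingredient that closes the argument is the edge density, which never enters your inductive step. No structural reduction to multipartite graphs is needed.
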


This bound is sharp at every \(s\)-clique density, as witnessed by the complete multipartite graph \(K_{\gamma,n}\) introduced above. For integers \(t>s\ge2\), the composition \(F_t\circ F_s^{-1}\colon [0,1/s!)\to[0,1/t!)\) is piecewise concave on each interval between consecutive critical points \(\theta_{p,s}\) and \(\theta_{p+1,s}\), where \(p\ge t-2\); see Figure~\ref{fig:Curve}. As a consequence, it improves Bollob\'as's piecewise-linear interpolation bound~\cite{bollobas1976complete} at every non-critical point. Theorem~\ref{THM:ordinary-main} also yields the aforementioned theorem of Erd\H{o}s--Zykov on generalized Tur\'an numbers. In the case \(s=2\), it recovers Reiher's clique density theorem (Theorem~\ref{THM:Reiher}). Our proof is based on a new inductive argument that is conceptually simple; see Section~\ref{SEC:Overview} for an overview.

A direct consequence of Theorem~\ref{THM:ordinary-main} is a hierarchy among all clique densities for every graph. Note that for every integer \(r \ge 2\), the inverse function \(F_r^{-1}\) is strictly increasing, and hence preserves the ordering of clique-density parameters. Also note that \(F_2(\gamma)=\gamma\).

\begin{corollary}\label{Coro:density}
    For every graph \(G\), it holds that
    \[
    p_2(G)=F_2^{-1}\big(p_2(G)\big) \le F_3^{-1}\big(p_3(G)\big)\le F_4^{-1}\big(p_4(G)\big) \le \cdots \le F_r^{-1}\big(p_r(G)\big) \le F_{r+1}^{-1}\big(p_{r+1}(G)\big)\le \cdots.
    \]
    Moreover, each inequality is sharp.
\end{corollary}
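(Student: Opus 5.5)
The corollary follows from Theorem~\ref{THM:ordinary-main} applied with consecutive indices, together with monotonicity properties of the functions \(F_r\) and \(F_r^{-1}\). Fix \(r\ge 2\) and apply Theorem~\ref{THM:ordinary-main} with \(s=r\), \(t=r+1\):
\[
p_{r+1}(G)\ge F_{r+1}\bigl(F_r^{-1}(p_r(G))\bigr).
\]
I would then apply \(F_{r+1}^{-1}\) to both sides. This needs two facts about the generalized inverse.

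The first fact is that \(F_{r+1}^{-1}\) is nondecreasing on \([0,1/(r+1)!)\). This holds because \(F_{r+1}^{-1}(0)=\frac{r-1}{2r}\) is the left endpoint of the strictly increasing part of \(F_{r+1}\), and on \((0,1/(r+1)!)\) the function \(F_{r+1}^{-1}\) is the inverse of a continuous strictly increasing function.

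The second fact is the identity \(F_{r+1}^{-1}(F_{r+1}(x))\ge x\) for every \(x=F_r^{-1}(y)\). Here \(x\ge \frac{r-2}{2(r-1)}\). If \(x\ge \frac{r-1}{2r}\), then \(x\) lies in the strictly increasing part of \(F_{r+1}\), so \(F_{r+1}^{-1}(F_{r+1}(x))=x\). This includes the boundary point \(x=\frac{r-1}{2r}\), where \(F_{r+1}(x)=0\) and \(F_{r+1}^{-1}(0)=\frac{r-1}{2r}\) by definition. If \(x<\frac{r-1}{2r}\), then \(F_{r+1}(x)=0\), so \(F_{r+1}^{-1}(F_{r+1}(x))=\frac{r-1}{2r}>x\).

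Combining the two facts gives
\[
F_{r+1}^{-1}(p_{r+1}(G))\ge F_{r+1}^{-1}\bigl(F_{r+1}(F_r^{-1}(p_r(G)))\bigr)\ge F_r^{-1}(p_r(G)).
\]
Chaining these inequalities over \(r\) gives the full hierarchy. The first equality \(p_2(G)=F_2^{-1}(p_2(G))\) holds because \(F_2\) is the identity.

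For sharpness of each inequality, I would take \(G=K_{\gamma,n}\) with \(\gamma\ge \frac{r-1}{2r}\). Then \(p_r(G)\to F_r(\gamma)\) and \(p_{r+1}(G)\to F_{r+1}(\gamma)\). Both values lie on the strictly increasing parts of the respective functions, or equal the zero endpoint in the boundary case \(\gamma=\frac{r-1}{2r}\) for \(F_{r+1}\). Hence both inverses return \(\gamma\) in the limit, by continuity of the inverses on their domains.

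The main point requiring care is the second fact, which rests on the convention for \(F^{-1}(0)\) and the flat zero region of \(F_{r+1}\).
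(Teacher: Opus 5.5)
Your proposal is correct and follows the paper's route: apply Theorem~\ref{THM:ordinary-main} (equivalently the clique lifting, Theorem~\ref{THM:clique-lifting}) with \(s=r\), \(t=r+1\), apply the monotone map \(F_{r+1}^{-1}\) and chain over \(r\), and witness sharpness by \(K_{\gamma,n}\). The paper leaves implicit the check that \(F_{r+1}^{-1}(F_{r+1}(x))\ge x\) on the flat zero region of \(F_{r+1}\); you make it explicit and handle it correctly.
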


 \begin{figure}[htbp]
 \begin{minipage}{0.49\linewidth}
      \centering
      \begin{tikzpicture}[
        xscale=0.64, yscale=0.7,
        >=stealth,
        point/.style={circle,draw=black, fill=white,inner sep=1.5pt},
        proj x/.style={densely dotted, gray, thin},
        proj y/.style={densely dotted, gray, thin},
        label x/.style={font=\footnotesize, below=2pt},
        label y/.style={font=\footnotesize, left=2pt},
    ]
    \draw[->, thick] (1.1,0) -- (11.0,0)
        node[right] {\(p_3\)};
    \draw[->, thick] (1.1,0) -- (1.1,5.0)
        node[above] {\(p_4\)};

    \node[below left] at (1.1,0) {\(0\)};

    \coordinate (P0) at (2.85, 0);       
    \coordinate (P1) at (4.500, 0.563);  
    \coordinate (P2) at (5.760, 1.152);   
    \coordinate (P3) at (6.667, 1.667);  
    \coordinate (P4) at (7.347, 2.099);  
    \coordinate (P5) at (7.785, 2.401);  
    \coordinate (P6) at (8.287, 2.765);   
    \coordinate (PE) at (10.000, 4.500);  
    \draw[thick, red] (1.1,0) -- (P0);

    \draw[thick, red]
        (P0) .. controls (3.5, 0.40) and (3.8, 0.52) .. (P1);

    \draw[thick, red]
        (P1) .. controls (4.95, 0.95) and (5.2, 1.07) .. (P2);

    \draw[thick, red]
        (P2) .. controls (6.1, 1.52) and (6.3, 1.62) .. (P3);

    \draw[thick, red]
        (P3) .. controls (6.9, 1.93) and (7.0, 2.0) .. (P4);

    \draw[thick, red]
        (P4) .. controls (7.5, 2.30) and (7.6, 2.36) .. (P5);

    \draw[thick, red]
        (P5) .. controls (7.9, 2.50) and (8.0, 2.7) .. (P6);

    \draw[thick, blue] (P0) -- (P1);
    \draw[thick, blue] (P1) -- (P2);
    \draw[thick, blue] (P2) -- (P3);
    \draw[thick, blue] (P3) -- (P4);
    \draw[thick, blue] (P4) -- (P5);
    \draw[thick, blue] (P5) -- (P6);

    \draw[thick, gray, dotted] (P6) to[out=36, in=-125] (PE);

    \draw[proj x] (P0) -- (P0 |- 0,0)
        node[label x] {\(\frac{1}{27}\)};

    \draw[proj x] (P1) -- (P1 |- 0,0)
        node[label x] {\(\frac{1}{16}\)};
    \draw[proj y] (P1) -- (P1 -| 1.1,0)
        node[label y] {\(1/256\)};

    \draw[proj x] (P2) -- (P2 |- 0,0)
        node[label x] {\(\frac{2}{25}\)};
    \draw[proj y] (P2) -- (P2 -| 1.1,0)
        node[label y] {\(1/125\)};

    \draw[proj x] (P3) -- (P3 |- 0,0)
        node[label x] {\(\frac{10}{108}\)};
    \draw[proj y] (P3) -- (P3 -| 1.1,0)
        node[label y] {\(15/1296\)};

    \draw[proj x] (P4) -- (P4 |- 0,0)
        node[label x] {\(\frac{35}{343}\)};
    \draw[proj y] (P4) -- (P4 -| 1.1,0)
        node[label y] {\(35/2401\)};

    \draw[proj x] (PE) -- (PE |- 0,0)
        node[label x] {\(\frac{1}{6}\)};
    \draw[proj y] (PE) -- (PE -| 1.1,0)
        node[label y] {\(1/24\)};

    \foreach \p in {P0,P1,P2,P3,P4,P5,P6,PE} {
      \node[point] at (\p) {};
    }

    \end{tikzpicture}
    \end{minipage}
    \begin{minipage}{0.49\linewidth}
      \centering
      \begin{tikzpicture}[
          xscale=0.64, yscale=0.7,
          >=stealth,
          point/.style={circle,draw=black, fill=white,inner sep=1.5pt},
          proj x/.style={densely dotted, gray, thin},
          proj y/.style={densely dotted, gray, thin},
          label x/.style={font=\footnotesize, below=2pt},
          label y/.style={font=\footnotesize, left=2pt},
      ]

      \draw[->, thick] (1.1,0) -- (11.0,0)
          node[right] {\(p_s\)};
      \draw[->, thick] (1.1,0) -- (1.1,5.0)
          node[above] {\(p_t\)};

      \node[below left] at (1.1,0) {\(0\)};

      \coordinate (P0) at (2.85, 0);   
      \coordinate (P1) at (4.500, 0.563); 
      \coordinate (P2) at (5.760, 1.152); 
      \coordinate (P3) at (6.667, 1.667); 
      \coordinate (P4) at (7.347, 2.099);
      \coordinate (P5) at (7.785, 2.401); 
      \coordinate (P6) at (8.287, 2.765); 
      \coordinate (PE) at (10.000, 4.500); 

      \draw[thick, red] (1.1,0) -- (P0);

      \draw[thick, red]
          (P0) .. controls (3.5, 0.40) and (3.8, 0.52) .. (P1);

      \draw[thick, red]
          (P1) .. controls (4.95, 0.95) and (5.2, 1.07) .. (P2);

      \draw[thick, red]
          (P2) .. controls (6.1, 1.52) and (6.3, 1.62) .. (P3);

      \draw[thick, red]
          (P3) .. controls (6.9, 1.93) and (7.0, 2.0) .. (P4);

      \draw[thick, red]
          (P4) .. controls (7.5, 2.30) and (7.6, 2.36) .. (P5);

      \draw[thick, red]
          (P5) .. controls (7.9, 2.50) and (8.0, 2.7) .. (P6);

      \draw[thick, gray, dotted] (P6) to[out=36, in=-125] (PE);

      \draw[proj x] (P0) -- (P0 |- 0,0)
          node[label x] {\(\theta_{t-2,s}\)};

      \draw[proj x] (P1) -- (P1 |- 0,0)
          node[label x] {\(\theta_{t-1,s}\)};
      \draw[proj y] (P1) -- (P1 -| 1.1,0)
          node[label y] {\(\theta_{t-1,t}\)};

      \draw[proj x] (P2) -- (P2 |- 0,0)
          node[label x] {\(\theta_{t,s}\ \)};
      \draw[proj y] (P2) -- (P2 -| 1.1,0)
          node[label y] {\(\theta_{t,t}\)};

      \draw[proj x] (P3) -- (P3 |- 0,0)
          node[label x] {\(\ \theta_{t+1,s}\)};
      \draw[proj y] (P3) -- (P3 -| 1.1,0)
          node[label y] {\(\theta_{t+1,t}\)};

      \draw[proj x] (PE) -- (PE |- 0,0)
          node[label x] {\(\frac{1}{s!}\)};
      \draw[proj y] (PE) -- (PE -| 1.1,0)
          node[label y] {\(\frac{1}{t!}\)};

      \foreach \p in {P0,P1,P2,P3,P4,P5,P6,PE} {
        \node[point] at (\p) {};
      }

  \end{tikzpicture}
    \end{minipage}
    \caption{}
      \caption*{%
\textbf{Left:} The red curve shows the bound of Theorem~\ref{THM:ordinary-main} for \((s,t)=(3,4)\), while the blue curve shows Bollob\'as's corresponding piecewise-linear interpolation bound. 
\textbf{Right:} A schematic illustration of the curve given by Theorem~\ref{THM:ordinary-main} for general \(t>s\ge 2\), where the quantities \(\theta_{p,r}\) are defined in~\eqref{eq:theta-intro}.
In both figures, the dotted extensions indicate that infinitely many further pieces accumulate at the point \((1/s!,\,1/t!)\).
}
\label{fig:Curve}
\end{figure}

The rest of the paper is organized as follows. In Section~\ref{SEC:WeightedGraphs}, we introduce weighted graphs and reduce the proof of Theorem~\ref{THM:ordinary-main} to the clique lifting theorem, Theorem~\ref{THM:clique-lifting}. In Section~\ref{SEC:Overview}, we outline the proof of Theorem~\ref{THM:clique-lifting}. Section~\ref{SEC:Analytic} collects several auxiliary lemmas, including one on analytic function extension. In Section~\ref{SEC:MainProof}, we prove Theorem~\ref{THM:clique-lifting} by induction on \(r\). In Section~\ref{SEC:concluding}, we establish a stability result for Theorem~\ref{THM:ordinary-main} and discuss a related open problem for future work.
Throughout the paper, we write \([n]=\{1,2,\dots,n\}\) for a positive integer \(n\), and for a set \(X\) we denote by \(\binom{X}{r}\) the family of all \(r\)-element subsets of \(X\).

\section{Weighted Graphs and Clique Lifting}\label{SEC:WeightedGraphs}
\noindent
In this section, we formulate our main result in the setting of weighted graphs and reduce the proof of Theorem~\ref{THM:ordinary-main} to a weighted graph statement (Theorem~\ref{THM:clique-lifting}).
A {\it weighted graph} \(\GG(\vec{\mathbf{x}}, \bA)\) of order \(n\) consists of a vector \(\vec{\mathbf{x}} = (x_1, x_2, \dots, x_n)\in \mathbb{R}_{\ge 0}^{n}\) with \(\sum_{i=1}^n x_i = 1\) and an \(n \times n\) symmetric matrix \(\bA = (a_{ij})_{i,j \in [n]}\) such that \(0\le a_{ij} \le 1\) and \(a_{ii} = 0\) for all \(i, j \in [n]\).
If \(e=\{i,j\}\in \binom{[n]}{2}\), we write 
\(a_e\) instead of \(a_{ij}\). 

For a weighted graph \(\GG=\GG(\vec{\mathbf{x}}, \bA)\) of order \(n\) and an integer \(r\ge1\), we define the {\it \(r\)-clique density} of \(\GG\) to be
\begin{equation}\label{eq:weighted-density}
        \GG(K_r)
        \coloneqq
        \sum_{S\in\binom{[n]}{r}}
        \prod_{e\in \binom{S}{2}}a_{e}
        \prod_{i\in S}x_i.
\end{equation}
Every \(n\)-vertex graph \(G\) can be viewed as a weighted graph \(\GG=\GG(\vec{\mathbf{x}},\bA)\) of order \(n\), where \(x_i=1/n\) for each \(i\in[n]\) and \(\bA\) is the adjacency matrix of \(G\).
Under this correspondence, \(\GG(K_r)=p_r(G)\) for all \(r\ge 2\). 

Now we state the weighted version of \Cref{THM:ordinary-main} as follows.

\begin{theorem}\label{THM:Main}
Let \(t >s \ge 2 \) be integers. Then for every weighted graph \(\GG\),
\begin{equation}\label{eq:main}
        \GG(K_t)\ge F_t\bigl(F_s^{-1}(\GG(K_s))\bigr).
\end{equation}
Moreover, this bound is sharp.
\end{theorem}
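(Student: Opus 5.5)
Sharpness first. For \(\gamma\in[0,1/2)\) take the weighted graph realizing the limit of \(K_{\gamma,n}\): \(p+1\) vertices, weights \(\frac{1+\alpha}{p+1}\) on \(p\) of them and \(\frac{1-p\alpha}{p+1}\) on the last, with all off-diagonal entries equal to \(1\). Its \(r\)-clique density is exactly \(F_r(\gamma)\) for every \(r\). We then need \(F_s^{-1}(F_s(\gamma))=\gamma\) whenever \(F_s(\gamma)>0\); this holds because \(F_s\) is strictly increasing beyond \(\frac{s-2}{2(s-1)}\). When \(F_s(\gamma)=0\), that is \(\gamma\le\frac{s-2}{2(s-1)}\), we get \(F_t(\gamma)=0=F_t(F_s^{-1}(0))\), since \(t>s\) gives \(F_t\equiv0\) on \([0,\frac{t-2}{2(t-1)}]\supseteq[0,\frac{s-2}{2(s-1)}]\). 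So equality is attained at every value of \(\GG(K_s)\).

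For the inequality, the plan is to reduce to consecutive steps. It suffices to prove the one-step statement
\[
\GG(K_{r+1})\ge F_{r+1}\bigl(F_r^{-1}(\GG(K_r))\bigr)\qquad(r\ge2).
\]
Given this, chain it from \(r=s\) up to \(r=t-1\). The claim after the step from \(r\) to \(r+1\) is
\[
\GG(K_{r+1})\ge F_{r+1}\bigl(F_s^{-1}(\GG(K_s))\bigr).
\]
To pass from this claim to the next one, apply the one-step statement at level \(r+1\) and use monotonicity of \(F_{r+2}\circ F_{r+1}^{-1}\). We also need the identity \(F_{r+1}^{-1}\circ F_{r+1}\circ F_s^{-1}=F_s^{-1}\). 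It holds where the values are positive, and the zero cases are checked by the threshold comparison above.

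The one-step statement is, I expect, exactly what the paper calls the clique lifting theorem (Theorem~\ref{THM:clique-lifting}). I would prove it by induction on \(r\), working with a weighted graph that minimizes \(\GG(K_{r+1})\) subject to fixed \(\GG(K_r)\). The steps are:
\begin{enumerate}
\item Use a Zykov-type symmetrization and Lagrange multipliers in the weights \(x_i\). This lets us assume all \(a_{ij}\in\{0,1\}\), since the objective is multilinear in each \(a_{ij}\). We may also assume nonadjacent vertices can be merged, so \(\GG\) is a weighted clique.
\item On a weighted clique with weights \(x_1,\dots,x_m\), the densities are the elementary symmetric polynomials \(e_r(x)\) and \(e_{r+1}(x)\). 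The problem becomes minimizing \(e_{r+1}\) given \(e_r\) on the simplex.
\item The Lagrange conditions \(\partial e_{r+1}/\partial x_i=\lambda\,\partial e_r/\partial x_i+\mu\) say that each \(x_i\) is a root of a fixed low-degree polynomial. Combined with a second-order argument (Reiher-style local moves on pairs of weights), this forces at most two distinct weights, with only one vertex carrying the smaller weight. That is precisely the configuration \(K_{\gamma}\).
\end{enumerate}
Once the extremal graph has this form, comparing its \(e_r\) and \(e_{r+1}\) with \(F_r\) and \(F_{r+1}\) gives the bound directly.

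The main obstacle is step 3, ruling out configurations with several distinct weights or several small vertices. The naive second-order condition does not suffice here; this is the heart of Reiher's argument. The first thing I would try is to use the inductive hypothesis on vertex links. Applying the \((r-1)\to r\) statement to the neighbourhood of each vertex gives inequalities relating \(\partial e_r/\partial x_i\) and \(\partial e_{r+1}/\partial x_i\). Averaging these inequalities with weights \(x_i\), and combining with the concavity of \(F_{r+1}\circ F_r^{-1}\) on each interval between consecutive critical points \(\theta_{p,r}\), should close the induction. This would need an analytic-extension lemma to handle the piecewise structure across the critical points \(\theta_{p,r}\).
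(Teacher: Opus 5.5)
Your sharpness construction and your reduction to the one-step statement $\GG(K_{r+1})\ge\Phi_r(\GG(K_r))$ are correct and agree with the paper; the reduction chains this statement from $r=s$ to $r=t-1$. The gap is that this one-step statement (Theorem~\ref{THM:clique-lifting}) carries all the difficulty, and it is not proved.

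\textbf{Step~1 fails as stated.} $\GG(K_r)$ is held fixed and is multilinear in the same variables as $\GG(K_{r+1})$. So you cannot move a single $a_{ij}$, or shift weight between two non-adjacent vertices, without changing $\GG(K_r)$. Consider the penalised form $f=\Phi_r(\GG(K_r))-\GG(K_{r+1})$. Both densities are affine along each such direction, and the first-order condition makes the linear part cancel. However, $\Phi_r$ is strictly concave between consecutive critical points: its slope $\frac{p-r+1}{(r-1)(p+1)}(1+\alpha)$ decreases as $\GG(K_r)$ grows. Hence $f$ is strictly concave along these directions whenever $\GG(K_r)$ actually changes. Such moves strictly decrease the defect, so nothing lets you push edge weights to $\{0,1\}$ or merge non-adjacent vertices. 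This is why the paper keeps fractional edge weights and extracts from the edges only the one-sided condition $\GG_{ij}(K_{r-1})\le\lambda\,\GG_{ij}(K_{r-2})$. Even granting step~1, step~3 is left open by your own account.

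\textbf{The fallback idea.} It points in the paper's direction but misses the mechanism, and the part you do specify goes the wrong way. Averaging the link inequalities and then invoking concavity cannot give a lower bound. Bounding $\sum_i x_i\rho_i^r\Phi_{r-1}(\GG_i(K_{r-1})/\rho_i^{r-1})$ from below would need convexity, while $\Phi_{r-1}$ and $\Phi_r$ are concave on each piece. The paper proceeds instead as follows.
\begin{enumerate}
\item It applies the induction hypothesis to the normalised neighbourhood graph $\GG^{(i)}$. This introduces the weighted degree $\rho_i$ through $\GG_i(K_r)/\rho_i^r\ge\Phi_{r-1}(\GG_i(K_{r-1})/\rho_i^{r-1})$.
\item It uses the vertex Lagrange condition $\GG_i(K_r)=\lambda\GG_i(K_{r-1})-\mu$ and the bound $1\le\eta_i\le r$ (from the edge condition). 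Together these express both local densities through a single parameter $\beta$.
\item Lemma~\ref{LEM:ratio} and the envelope Lemma~\ref{LEM:envelope} then turn each local inequality into the pointwise bound $\rho_i\ge\tau_i$ of Claim~\ref{CLAIM:local-general}. This bound is linear in $\eta_i$, so it sums exactly.
\item Summing $x_i\rho_i$ gives a lower bound on $2\GG(K_2)$ in terms of $M=\mu/\mu_0>1$.
\item Running the induction hypothesis \emph{downward} to $K_2$ gives the upper bound $\GG(K_2)\le F_r^{-1}(\GG(K_r))$. Monotonicity of $h(M)$ shows the two bounds are incompatible.
\end{enumerate}
Separately, at the critical values $\GG(K_r)=\theta_{p,r}$, $\Phi_r$ is not differentiable and the Lagrange argument is unavailable. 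These need the quadratic inequality of Lemmas~\ref{LEM:quadratic-MM} and~\ref{LEM:critical-values}. None of these ingredients appears in your proposal, so it is a plan rather than a proof.
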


Equivalently, Theorem~\ref{THM:Main} can be written in the following explicit form:
for any integers \(t>s\ge2\) and every weighted graph \(\GG\) with \(\GG(K_s)>0\), if the integer \(p\ge s-1\) and \(\alpha\in[0,1/p)\) are (uniquely) determined by
\[
\GG(K_s)=\frac{\binom{p+1}{s}}{(p+1)^s}(1+\alpha)^{s-1}\bigl(1-(s-1)\alpha\bigr)
\implies 
\GG(K_t)\ge\frac{\binom{p+1}{t}}{(p+1)^t}(1+\alpha)^{t-1}\bigl(1-(t-1)\alpha\bigr).
\]

Our focus, however, is the following special case of Theorem~\ref{THM:Main}, which we refer to as the \emph{clique lifting}.
For each integer \(r\ge2\), define
\(\Phi_r \coloneqq F_{r+1}\circ F_r^{-1}\)
as the map from \([0,1/r!)\) to \([0,1/(r+1)!)\), where \(F_r^{-1}\) denotes the generalized inverse introduced earlier. Since both \(F_r^{-1}\) and \(F_{r+1}\) are continuous and non-decreasing, so is \(\Phi_r\).

\begin{theorem}[The clique lifting]\label{THM:clique-lifting}
Let \(r\ge2\) be an integer. Then for every weighted graph \(\GG\)
\begin{equation}\label{equ:clique-lifting}
\GG(K_{r+1})\ge \Phi_r\bigl(\GG(K_r)\bigr).
\end{equation}
\end{theorem}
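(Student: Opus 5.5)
We argue by induction on \(r\), following the Nikiforov--Reiher weighted-graph framework. Theorem~\ref{THM:Main} follows by chaining: \(\Phi_{t-1}\circ\cdots\circ\Phi_s=F_t\circ F_s^{-1}\), because \(F_r\circ F_r^{-1}\) is the identity on the increasing range and each \(\Phi_r\) is non-decreasing. For the lifting itself, fix \(r\) and assume \(\GG(K_{j+1})\ge\Phi_j(\GG(K_j))\) for all \(2\le j<r\). It suffices to treat \(\GG(K_r)\in(\theta_{p,r},\theta_{p+1,r}]\) for one fixed \(p\ge r-1\); below \(\theta_{r-1,r}=0\) the claim is trivial. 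Weighted graphs of order \(n\) form a compact set, so we may take a counterexample \(\GG\) of minimal order \(n\) that minimizes \(\GG(K_{r+1})-\Phi_r(\GG(K_r))\). If \(\Phi_r\) is nonsmooth at a critical point, we instead minimize \(\GG(K_{r+1})-\lambda\GG(K_r)\) with \(\lambda\) a supporting slope of \(\Phi_r\) on the current piece. Here we use that \(\Phi_r\) is concave on each piece; this should follow from a direct calculus check in the parameter \(\alpha\) via~\eqref{eq:F-def}.

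\textbf{Step 1 (structure via linearity).} Both \(\GG(K_r)\) and \(\GG(K_{r+1})\) are multilinear in each \(a_e\) and in \(\vec{\mathbf x}\). Standard Zykov symmetrization then lets us push each \(a_e\) to \(\{0,1\}\): concavity of \(\Phi_r\) on the piece turns the objective into a concave function of \(a_e\) along such a segment. It also lets us assume non-adjacent vertices are merged. Hence \(\GG\) is a weighted clique \(K_n\) with all \(a_e=1\) and weights \(x_1,\dots,x_n>0\). The densities are then the elementary symmetric polynomials \(e_r(\vec{\mathbf x})\) and \(e_{r+1}(\vec{\mathbf x})\) on the simplex.

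\textbf{Step 2 (Lagrange and the induction).} At an interior minimizer, for every \(i\) we have
\[
\partial_i e_{r+1}-\Phi_r'(e_r)\,\partial_i e_r=\mu ,
\qquad \partial_i e_k=e_{k-1}(\vec{\mathbf x}_{-i}) .
\]
Since \(e_{k-1}(\vec{\mathbf x}_{-i})\) is affine in \(x_i\) with coefficients given by lower symmetric functions, each condition is a polynomial equation in \(x_i\) of bounded degree with fixed coefficients. So the \(x_i\) take at most two or three distinct values. We then refine this using the inductive hypothesis applied to the link weighted graphs: restrict to \(N(i)\), renormalize, and relate \(K_{r}\)/\(K_{r-1}\) densities in the link to \(\partial_i e_{r+1}\) and \(\partial_i e_r\). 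The target outcome is that the weights take exactly two values, with at most one class smaller. For such two-value vectors the inequality becomes a one-variable inequality. The equality case is \(K_{\gamma,n}\), and the remaining cases should be checked by a monotonicity argument in the number of large parts.

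\textbf{Main obstacle.} The hard step is excluding configurations with several small classes, or three distinct weights, among the critical points, since Lagrange conditions alone allow them. This is exactly where Reiher's argument is delicate. My plan is to compare \(\GG\) against the graph obtained by merging two small classes, or moving weight between them, along a one-parameter path. I would show the objective, as a function of the path parameter, is analytic and of fixed sign. Extending the relevant quantity analytically in \(\alpha\) beyond \([0,1/p)\) should let us compare critical-point identities across consecutive pieces. Verifying that sign, which reduces to a polynomial inequality in the ratio of the two weights and in \(p\), is what I expect to consume most of the work.
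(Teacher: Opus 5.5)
There is a genuine gap at Step~1, and everything after it depends on that step. You claim that concavity of \(\Phi_r\) on a piece makes the objective concave along \(a_e\in[0,1]\), and along weight transfers between non-adjacent vertices. From this you push every \(a_e\) to \(\{0,1\}\) and merge down to a weighted clique. The sign is reversed. Along such a segment both \(\GG(K_r)\) and \(\GG(K_{r+1})\) are affine in the parameter. So the quantity you minimize, \(\GG(K_{r+1})-\Phi_r(\GG(K_r))\), is affine minus concave-of-affine, i.e.\ \emph{convex}, and its minimum can be interior.

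Geometrically, the point \((\GG(K_r),\GG(K_{r+1}))\) moves along a straight segment in the plane. A chord joining two points on or above a concave arc of \(\Phi_r\) can pass strictly below that arc. Hence the inequality at the endpoints, in particular for weighted cliques, tells you nothing about the interior.

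This is not a cosmetic slip. Minimizing a fixed linear combination \(\GG(K_{r+1})-\lambda\GG(K_r)\), as in your supporting-slope device, only sees the convex hull of the achievable region. It therefore cannot beat Bollob\'as's piecewise-linear bound, which lies strictly below \(\Phi_r\) at every non-critical point. That device is fine at the critical values themselves. The paper handles those separately via Lemma~\ref{LEM:quadratic-MM} and the induction hypothesis (Lemma~\ref{LEM:critical-values}).

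Step~2 inherits this failure, and it is only a plan in any case. On a weighted clique, \(e_r(\vec{\mathbf x}_{-i})=\sum_{j=0}^{r}(-x_i)^j e_{r-j}(\vec{\mathbf x})\), so the Lagrange condition is a degree-\(r\) polynomial equation in \(x_i\). It allows up to \(r\) distinct weights, not ``two or three''. The exclusion of bad configurations is deferred to a sign check you have not carried out.

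The missing idea is to avoid any structural reduction and work locally on an arbitrary maximizer of the defect, as the paper does:
\begin{enumerate}
\item The vertex condition \(\GG_i(K_r)=\lambda\GG_i(K_{r-1})-\mu\) and the edge condition \(\GG_{ij}(K_{r-1})\le\lambda\GG_{ij}(K_{r-2})\) give \(1\le\eta_i\le r\), where \(\eta_i=\lambda\GG_i(K_{r-1})/\mu\).
\item The identity \(\GG^{(i)}(K_j)=\GG_i(K_j)/\rho_i^{\,j}\) lets the case \(r-1\) be applied to every neighbourhood graph.
\item Lemma~\ref{LEM:envelope}, which extends the \(\alpha\)-parametrization to \(\beta\le 1/(r-1)\), and the monotonicity in Lemma~\ref{LEM:ratio} turn \(M=\mu/\mu_0>1\) into the degree lower bound of Claim~\ref{CLAIM:local-general}.
\item Summing these bounds gives a lower bound on \(\GG(K_2)\) that exceeds \(\gamma\), because \(h(M)>h(1)\).
\item Chaining the induction hypothesis gives \(\GG(K_2)\le F_r^{-1}(\GG(K_r))=\gamma\), a contradiction.
\end{enumerate}
The link-graph induction you mention only in passing is the whole engine of the proof, and it needs no symmetrization.
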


We remark that Theorem~\ref{THM:clique-lifting} \(\implies\) Theorem~\ref{THM:Main} \(\implies\) Theorem~\ref{THM:ordinary-main}.\footnote{In fact, all three theorems are equivalent. The implication
Theorem~\ref{THM:Main} \(\implies\) Theorem~\ref{THM:clique-lifting}
is immediate, while
Theorem~\ref{THM:ordinary-main} \(\implies\) Theorem~\ref{THM:Main}
follows from a standard limiting argument; see~\cite[Section~2]{reiher2016clique} for details.}
To prove the first implication, note that \eqref{equ:clique-lifting} and the monotonicity of \(F_{r+1}^{-1}\) imply that 
\(F_{r+1}^{-1}\bigl(\GG(K_{r+1})\bigr)\ge F_r^{-1}\bigl(\GG(K_r)\bigr)\) for every \(r\ge 2\). Consequently, for all integers \(t>s\ge 2\),
\(F_t^{-1}\bigl(\GG(K_t)\bigr)\ge F_{t-1}^{-1}\bigl(\GG(K_{t-1})\bigr)\ge\cdots\ge F_s^{-1}\bigl(\GG(K_s)\bigr)\),
which implies \eqref{eq:main}. 
The second implication follows immediately from the correspondence between a graph and its associated weighted graph. 

\medskip

Before proceeding, we briefly review the known results for weighted graphs. The notion of weighted graphs was introduced by Nikiforov~\cite{nikiforov2011number}, who reformulated the clique density problem in this setting and proposed an analytic approach to its study. He proved the weighted clique density theorem for triangles and \(K_4\), and the general case was later established by Reiher~\cite{reiher2016clique}, thereby resolving the Lov\'asz--Simonovits conjecture in full.

\medskip

In the remainder of the paper, we prove Theorem~\ref{THM:clique-lifting} by induction on \(r\ge 2\). The base case \(r=2\) is precisely Nikiforov's theorem for triangles, which we state below. Observe that \(\Phi_2=F_3\).

\begin{theorem}[Nikiforov~\cite{nikiforov2011number}]\label{THM:Nikiforov-weighted}
For every weighted graph \(\GG\), it holds that 
\(\GG(K_3)\ge \Phi_2\bigl(\GG(K_2)\bigr).\)
\end{theorem}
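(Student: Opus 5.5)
This is Nikiforov's weighted triangle theorem, i.e.\ the weighted Razborov bound \(\GG(K_3)\ge F_3(\GG(K_2))\). The plan is to prove it by Nikiforov's analytic method: take an extremal weighted graph and analyze it with local perturbations. The optimization problem is set up as follows. Fix \(\gamma=\GG(K_2)\in(\theta_{p-1,2},\theta_{p,2}]\) with \(p\ge 2\); the range \(\gamma\le 1/4\) is trivial since \(F_3=0\) there. Among all weighted graphs of bounded order \(n\) with \(\GG(K_2)\ge\gamma\), choose one minimizing \(\GG(K_3)\). Compactness of the parameter space gives a minimizer. Among minimizers, take one with the fewest positive-weight vertices, and delete zero-weight vertices. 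Since \(F_3\) does not depend on \(n\), it suffices to show the minimizer has \(\GG(K_3)\ge F_3(\gamma)\).

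The first structural step uses the multilinearity of \(\GG(K_3)\) and \(\GG(K_2)\) in each edge weight \(a_{ij}\). Fixing the vertex weights, both densities are linear in a single \(a_{ij}\). Consider the change in \(\GG(K_3)\) per unit change in \(\GG(K_2)\): its coefficient is \(x_ix_j\sum_k a_{ik}a_{jk}x_k\), and the coefficient in \(\GG(K_2)\) is \(x_ix_j\). A standard exchange argument, moving edge weight from one pair to another while keeping \(\GG(K_2)\) fixed, shows that at a minimizer every fractional edge \(0<a_{ij}<1\) has the same common-neighbourhood weight \(\sum_k a_{ik}a_{jk}x_k=\lambda\). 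Here \(\lambda\) is the Lagrange multiplier. It also shows that edges with larger codegree are \(0\) and edges with smaller codegree are \(1\).

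The second step perturbs vertex weights via Lagrange/KKT conditions. We get \(3\sum_{S\ni i}(\ldots)=\) the link triangle density \(t_i\), and \(\partial_i\GG(K_2)=d_i=\sum_j a_{ij}x_j\). So there are \(\mu,\nu\) with \(t_i=\mu d_i+\nu\) for every vertex \(i\), with \(\nu\) accounting for the constraint \(\sum x_i=1\). Next, apply Zykov symmetrization to two non-adjacent vertices (\(a_{ij}=0\)): the objective is linear in shifting weight between them, so one of the two can be cloned onto the other. This decreases the vertex count, contradicting minimality. The conclusion is that all \(a_{ij}>0\). Combining this with the edge step, and perturbing further along the second-order terms, one aims to show that at most a few edges are fractional. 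The goal is that the minimizer is, after merging twin vertices, a complete \((p+1)\)-partite graph with classes possibly of unequal size, perhaps with a single fractional structure that a final local argument rules out.

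The third step is to reduce to a finite-dimensional optimization. For a complete multipartite weighted graph with class weights \(y_1,\dots,y_{m}\), the densities are the elementary symmetric polynomials \(e_2(y)\) and \(e_3(y)\). Minimizing \(e_3\) subject to \(e_2=\gamma\) and \(\sum y=1\) is a classical symmetric-function problem. By Lagrange, each \(y_i\) satisfies a quadratic equation, so \(y\) takes at most two distinct values. A second-order check then shows the minimum has \(p\) equal large classes and one small class. This is exactly \(K_{\gamma,n}\), giving \(F_3(\gamma)\).

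The main obstacle is the second step: proving that the minimizer has no fractional edges that cannot be symmetrized away, i.e.\ controlling configurations where \(0<a_{ij}<1\). Here I would follow Nikiforov's argument. I would pick a fractional edge \(ij\) and consider the two-parameter perturbation of \(x_i,x_j,a_{ij}\) that preserves \(\GG(K_2)\) and \(\sum x\). Then I would compute the second-order term of \(\GG(K_3)\) and show it can be made negative unless \(i\) and \(j\) have equal neighbourhoods, in which case they can be merged. I expect the delicate part to be the sign computation of this Hessian in the presence of the multiplier relations \(t_i=\mu d_i+\nu\).
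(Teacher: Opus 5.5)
The paper does not prove this statement; it cites it from Nikiforov as the base case \(r=2\). Judged on its own, your first and third steps are fine: they are the edge KKT conditions and the standard optimization of \(e_3\) given \(e_2\) over complete multipartite weighted graphs. The genuine gap is the second step, which is where the whole difficulty of the triangle problem lies. You need the minimizer to be complete multipartite after merging twins, and neither tool you offer delivers this.

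\textbf{The Zykov step fails under the constraint \(\GG(K_2)\ge\gamma\).} Suppose \(a_{ij}=0\) and \(x_i,x_j>0\). Along \(x_i+x_j=\mathrm{const}\), both \(\GG(K_2)\) and \(\GG(K_3)\) are affine. Their slopes are \(d_i-d_j\) and \(t_i-t_j=\mu(d_i-d_j)\), the latter by your own relation \(t_i=\mu d_i+\nu\). Unless \(d_i=d_j\), moving weight to the lower-degree vertex violates the constraint. Moving it to the higher-degree vertex raises \(\GG(K_3)\) at rate exactly \(\mu\) per unit of edge density gained. Lowering edge weights to return to \(\GG(K_2)=\gamma\) recovers at most that same rate. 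This is because positive edges have codegree at most the multiplier of the constraint \(\GG(K_2)\ge\gamma\), and that multiplier is also the \(\mu\) in the vertex condition. So for \(\mu>0\) you only slide along a line of slope \(\mu\). That yields no contradiction and no configuration of smaller order with the same edge density, so ``all \(a_{ij}>0\)'' does not follow. This failure of symmetrization to preserve edge density is exactly why symmetrization alone does not settle the clique density problem.

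\textbf{The second-order fallback does not close the gap either.} It is a hope rather than an argument, and the dichotomy you aim for is not correct as stated. Take \(\gamma\in(\theta_{p-1,2},\theta_{p,2})\), let \(c\) and \(1-pc\) be the class weights of \(K_{\gamma,n}\), and set \(u=1-(p-1)c\). Form a weighted graph with \(p-1\) vertices of weight \(c\) and two vertices of weight \(u/2\). Give every edge weight \(1\) except the edge between the last two, which gets weight \(a=4c(1-pc)/u^2\in(0,1)\).
\begin{itemize}
\item This graph has the same \(\GG(K_2)\) and \(\GG(K_3)\) as \(K_{\gamma,n}\). Hence it is a global minimizer, and no admissible perturbation lowers \(\GG(K_3)\).
\item It has the minimum possible order \(p+1\).
\item Its fractional edge joins two vertices with identical neighbourhoods, yet they cannot be merged, since there are no loops.
\end{itemize}
This particular case can be repaired by a density-preserving deformation. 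Still, it shows that minimal order plus local conditions do not pin down the structure.

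More seriously, your plan does not handle a minimizer whose edge weights all lie in \(\{0,1\}\), which is not complete multipartite, and whose non-adjacent vertices have distinct degrees. There is then no fractional edge to perturb, and by the above, symmetrization is unavailable. Arguments that do work here, such as this paper's inductive step for \(r\ge 3\) (modelled on Reiher's proof), never determine the shape of the minimizer. Instead they combine the Lagrange conditions with global inequalities to reach a contradiction. These include counting bounds of the type in Lemma~\ref{LEM:quadratic-MM} and local bounds in the neighbourhood graphs summed over all vertices. That global ingredient is what your proposal is missing.
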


\section{Proof Overview}\label{SEC:Overview}
\noindent
In this section, we outline the proof of Theorem~\ref{THM:clique-lifting}.

We first introduce the necessary notation, most of which follows that of Reiher~\cite{reiher2016clique}. Let \(\GG=\GG(\vec{\mathbf{x}},\bA)\) be a weighted graph of order \(n\). For \(I\subseteq[n]\) and \(r\ge1\), define {\it the rooted \(r\)-clique density} of \(\GG\) to be 
\begin{equation}\label{eq:local-density-def}
        \GG_I(K_r)
        \coloneqq
        \sum_{S\in \binom{[n]\setminus I}{r}}
        \left(\prod_{u\in I,\,v\in S} a_{uv}\right)
        \left(\prod_{e\in \binom{S}{2}} a_{e}\right)
        \prod_{v\in S} x_v.
\end{equation}
We write \(\GG_i(K_r)\) and \(\GG_{ij}(K_r)\) when \(I=\{i\}\) and \(I=\{i,j\}\), respectively. 
With this notation, it holds that
\begin{equation}\label{eq:derivative-identities}
        \frac{\partial \GG(K_r)}{\partial x_i}=\GG_i(K_{r-1}),
        \quad\text{and for \(r\ge 2\)}, \quad
        \frac{\partial \GG(K_r)}{\partial a_{ij}}=x_i x_j\,\GG_{ij}(K_{r-2}).
\end{equation}
Let \(\rho_i\coloneqq \GG_i(K_1)\) denote the {\it weighted degree} of the vertex \(i\in [n]\). 
If \(\rho_i>0\), we define {\it the weighted neighborhood graph}
\(\GG^{(i)}\coloneqq \GG^{(i)}(\vec{\mathbf{x}}^{(i)},\bA^{(i)})\) on the vertex set \([n]\setminus\{i\}\) such that its vertex weights and edge weights satisfy
\[
        x^{(i)}_j\coloneqq\frac{a_{ij}x_j}{\rho_i} \quad \mbox{and} \quad
        a^{(i)}_{jk}\coloneqq a_{jk}
        \quad \text{for all~} j,k\in[n]\setminus\{i\}.
\]
Since \(  \sum_{j\ne i}x^{(i)}_j
        =
        \frac{1}{\rho_i}\sum_{j\ne i}a_{ij} x_j
        =
        1,\)
this defines a weighted graph. 
Moreover, for each \(i\in [n]\) and \(r\ge 1\),
\begin{equation}\label{eq:neighborhood-graph}
    \GG^{(i)}(K_r)
        =
        \sum_{S\in\binom{[n]\setminus\{i\}}{r}}
        \prod_{e\in\binom{S}{2}}a_e
        \prod_{j\in S}\frac{a_{ij} x_j}{\rho_i}
        =
        \frac{1}{\rho_i^r}
        \sum_{S\in\binom{[n]\setminus\{i\}}{r}}
        \prod_{j\in S}a_{ij}
        \prod_{e\in\binom{S}{2}}a_e
        \prod_{j\in S}x_j
        =
        \frac{\GG_i(K_r)}{\rho_i^r}.
\end{equation}
This identity reveals an exact relationship between the \(r\)-clique density in \(\GG^{(i)}\) and the quantities \(\GG_i(K_r)\) and \(\rho_i\), which will play an important role in the subsequent proof.

\medskip

For comparison, we briefly recall the main ideas in Reiher's proof of the clique density theorem~\cite{reiher2016clique}. Since the full argument is technically involved, we only indicate the conceptual framework. 
Reiher's proof proceeds by a double induction on the clique size \(r\) and on the order \(n\) of the weighted graph. Fix \(r\), and let \(\GG\) be a counterexample of minimum order \(n\); subject to this choice, assume that
\(\GG\) maximizes the defect \(F_r(\GG(K_2))-\GG(K_r)\). 
Writing \(\gamma=\GG(K_2)\) and \(\lambda=F_r'(\gamma)\), the Lagrange multiplier condition for variations of the vertex weights gives a constant \(\mu\) such that
\(\mu=\lambda \GG_i(K_1)-\GG_i(K_{r-1})\) for every \(i\in[n]\). Starting from this relation, Reiher used a Razborov-type double-counting argument (i.e., \cite{razborov2008minimal}) to obtain the key estimate
\[
        (r-1)\GG(K_r)+(r+1)\GG(K_{r+1})
        \le \lambda\bigl(\GG(K_2)+3\GG(K_3)\bigr) -2\GG(K_2)\mu.
\]
Next, for each \(i\in[n]\), consider the weighted neighborhood graph \(\GG^{(i)}\), which has order \(n-1\). Applying the double induction to
\(\GG^{(i)}\) gives lower bounds on both \(\GG^{(i)}(K_{r-1})\) and
\(\GG^{(i)}(K_r)\). Together with the identity
\eqref{eq:neighborhood-graph} and a delicate analytic argument, these local
bounds yield a second estimate controlling
\(3\lambda\GG(K_3)-(r+1)\GG(K_{r+1})\).
Finally, adding the two estimates produces substantial cancellations and
leads to a contradiction.

\medskip

Whereas Reiher's theorem is an edge-to-clique result, Theorem~\ref{THM:clique-lifting} establishes the clique lifting from the \(K_r\)-density to the \(K_{r+1}\)-density. Its proof also has a different approach from Reiher's proof~\cite{reiher2016clique}: it uses a single induction on \(r\), rather than a double induction on the clique size and the order of the weighted graph.
The base case \(r=2\) is provided by Theorem~\ref{THM:Nikiforov-weighted}. Now fix \(r\ge3\) and let \(\GG\) be a counterexample with minimum order \(n\) and, subject to this, maximizing the defect \(\Phi_r(\GG(K_r))-\GG(K_{r+1})\). Let \(\GG(K_r)=F_r(\gamma)\). 
By the method of Lagrange multipliers, there exists a constant \(\mu\) such that \(\mu=\lambda\GG_i(K_{r-1})-\GG_i(K_r)\) for all \(i\in[n]\), where \(\lambda=\Phi_r'(x_0)\) with \(x_0=F_r(\gamma)\). Multiplying~\eqref{eq:vertex-general} by \(x_i\) and summing over \(i\in[n]\), we obtain 
\[
\mu=r\lambda\GG(K_r)-(r+1)\GG(K_{r+1}).
\]
Let \(\mu_0\) denote the corresponding extremal value of \(\mu\), namely \(\mu_0\coloneqq r\lambda F_r(\gamma)-(r+1)F_{r+1}(\gamma)\). By assumption, \(\GG(K_{r+1})<\Phi_r(\GG(K_r))=\Phi_r(F_r(\gamma))=F_{r+1}(\gamma)\)
and thus \(M\coloneqq \mu/\mu_0>1\). 
Now fix \(i\in [n]\) and consider the weighted neighborhood graph \(\GG^{(i)}\). Using \eqref{eq:neighborhood-graph} and applying the induction hypothesis to \(\GG^{(i)}\) (with \(r-1\)) yields
\begin{equation}\label{eq:outline}
\frac{\GG_i(K_r)}{\rho_i^r}=\GG^{(i)}(K_r)\geq \Phi_{r-1}(\GG^{(i)}(K_{r-1}))=\Phi_{r-1}\left(\frac{\GG_i(K_{r-1})}{\rho_i^{r-1}}\right).
\end{equation}
On the other hand, via an analytic extension technique (Lemma~\ref{LEM:envelope}), the condition \(M>1\) implies that \(\GG_i(K_r)\) is too small relative to \(\GG_i(K_{r-1})\). Combined with \eqref{eq:outline}, this forces \(\rho_i\) to be relatively large; see Claim~\ref{CLAIM:local-general} for details. Summing over all \(i\in[n]\) then gives a lower bound on \(\GG(K_2)\), while repeated clique lifting in the reverse direction yields the upper bound \(\GG(K_2)\le F_r^{-1}(\GG(K_r))\). A straightforward calculation shows that these two bounds are incompatible, thereby completing the proof.

\section{Auxiliary Lemmas}\label{SEC:Analytic}
\noindent In this section we collect several auxiliary lemmas that will be used in the proof of~\Cref{THM:clique-lifting}. 
Recall \(\Phi_r=F_{r+1}\circ F_r^{-1}\). 
Note that \(\Phi_r(x)>0\) if and only if \(x>(1/r)^r\).
By \eqref{eq:theta-intro}, we have
\begin{equation}\label{eq:theta}
    \theta_{p,r}=\frac{\binom{p+1}{r}}{(p+1)^r}=F_r\left(\frac{p}{2(p+1)}\right) \quad \mbox{for any integers \(r\ge 2\) and \(p\ge r-1\)}.
\end{equation} 

\begin{lemma}\label{LEM:ratio}
For every \(r\ge3\), the function
\begin{equation}\label{eq:ratio-function}
        u\longmapsto \frac{\Phi_{r-1}(u)}{u^{r/(r-1)}}
\end{equation}
is identically zero on \(\big(0,1/(r-1)^{r-1}\big]\)
and is strictly increasing on \(\big(1/(r-1)^{r-1},1/(r-1)!\big)\).
\end{lemma}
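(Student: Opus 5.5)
We work in the parameter \(\gamma\) rather than in \(u\). By \eqref{eq:theta}, \(\theta_{r-2,r-1}=(r-1)!/(r-1)^{r-1}\cdot\frac{1}{(r-1)!}=1/(r-1)^{r-1}\). As remarked before the lemma, \(\Phi_{r-1}(u)>0\) if and only if \(u>1/(r-1)^{r-1}\). This gives the first assertion directly. Indeed, for \(u\le 1/(r-1)^{r-1}\) we have \(F_{r-1}^{-1}(u)\le\frac{r-2}{2(r-1)}\), and \(F_r\) vanishes on \([0,\frac{r-2}{2(r-1)}]\).

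For the second assertion, note that on \(\bigl(1/(r-1)^{r-1},1/(r-1)!\bigr)\) the map \(F_{r-1}^{-1}\) is a continuous strictly increasing bijection onto \(\bigl(\frac{r-2}{2(r-1)},\frac12\bigr)\). Writing \(u=F_{r-1}(\gamma)\), it therefore suffices to show that
\[
\gamma\longmapsto \frac{F_r(\gamma)}{F_{r-1}(\gamma)^{r/(r-1)}}
\]
is strictly increasing on \(\bigl(\frac{r-2}{2(r-1)},\frac12\bigr)\). This function is continuous, since \(F_r\) and \(F_{r-1}\) are continuous and \(F_{r-1}>0\) there. Hence it is enough to prove strict monotonicity on each closed piece \([\theta_{p-1,2},\theta_{p,2}]\) with \(p\ge r-1\). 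On such a piece we use \eqref{eq:F-def} with this fixed \(p\) and \(\alpha\in[0,1/p]\). The endpoint \(\alpha=1/p\) gives \(\gamma=\theta_{p-1,2}\), where formula \eqref{eq:F-def} agrees with its value from the previous piece by continuity. Since \(\gamma=\frac{p}{2(p+1)}(1-\alpha^2)\) is decreasing in \(\alpha\), and the binomial prefactors are constants, the claim reduces to the following: the function
\[
g(\alpha)=\frac{(1+\alpha)^{r-1}(1-(r-1)\alpha)}{\bigl[(1+\alpha)^{r-2}(1-(r-2)\alpha)\bigr]^{r/(r-1)}}
\]
is strictly decreasing for \(\alpha\in[0,1/p]\subseteq[0,1/(r-1)]\). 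For \(p=r-1\) the numerator vanishes at \(\alpha=1/(r-1)\), which corresponds to the left endpoint where the ratio is \(0\), and this is consistent.

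The key step is a logarithmic derivative computation. In \((r-1)\frac{d}{d\alpha}\log g\), the total coefficient of \(\frac1{1+\alpha}\) is \((r-1)^2-r(r-2)=1\). One gets
\[
(r-1)\frac{g'(\alpha)}{g(\alpha)}=h(\alpha):=\frac{1}{1+\alpha}-\frac{(r-1)^2}{1-(r-1)\alpha}+\frac{r(r-2)}{1-(r-2)\alpha}.
\]
Then \(h(0)=1-(r-1)^2+r(r-2)=0\), and
\[
h'(\alpha)=-\frac{1}{(1+\alpha)^2}-\frac{(r-1)^3}{(1-(r-1)\alpha)^2}+\frac{r(r-2)^2}{(1-(r-2)\alpha)^2}<0
\]
for \(\alpha\in[0,1/(r-1))\). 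The last inequality holds because \((r-1)^3>r(r-2)^2\) and \(0<1-(r-1)\alpha<1-(r-2)\alpha\), so the middle term dominates the last one. Thus \(h<0\) on \((0,1/(r-1))\), and \(g\) is strictly decreasing on each piece. Gluing the pieces by continuity yields strict monotonicity on the whole interval. The only delicate point is the bookkeeping at the critical points and at \(p=r-1\); the sign verification of \(h\) is the real content, and it is elementary as shown.
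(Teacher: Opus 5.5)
Your proof is correct and follows essentially the paper's route. You parametrize each piece between consecutive critical points by \(\alpha\), show that the logarithmic derivative of \(F_r/F_{r-1}^{r/(r-1)}\) in \(\alpha\) is negative, and glue the pieces by continuity. The only difference is cosmetic: the paper simplifies that derivative to the closed form \(-r\alpha/\bigl((1+\alpha)(1-(r-2)\alpha)(1-(r-1)\alpha)\bigr)\), whereas you get its sign from \(h(0)=0\) and \(h'<0\); your \(h\) is exactly \(r-1\) times the paper's expression.
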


\begin{proof}
First consider when \( 0<u\le 1/(r-1)^{r-1}\).
Then \(F_{r-1}^{-1}(u)\le (r-2)/(2(r-1))\), and consequently
\(\Phi_{r-1}(u)=F_r(F_{r-1}^{-1}(u))=0\).  Thus the function in
\eqref{eq:ratio-function} is indeed identically zero on this interval.
 
For the strictly increasing part, we decompose the interval \(\big(1/(r-1)^{r-1},1/(r-1)!\big)\) as the union of \((\theta_{p-1,r-1},\theta_{p,r-1}]\) for \(p\ge r-1\) and verify the monotonicity on each subinterval.
Assume \(u\in (\theta_{p-1,r-1}, \theta_{p,r-1}]\) for some integer \(p\ge r-1\) and let \(\gamma\coloneqq F_{r-1}^{-1}(u)\).  Then \(\gamma\in(\theta_{p-1,2}, \theta_{p,2}]\) and 
\(\gamma=\frac{p}{2(p+1)}(1-\alpha^2)\coloneqq \gamma(\alpha)\) for some unique \(\alpha\in [0,1/p\)). 
Then 
\[
        F_{r-1}(\gamma)=
        \frac{\binom{p+1}{r-1}}{(p+1)^{r-1}}
        (1+\alpha)^{r-2}\bigl(1-(r-2)\alpha\bigr)
        \quad \mbox{and} \quad 
        F_{r}(\gamma)=
        \frac{\binom{p+1}{r}}{(p+1)^{r}}
        (1+\alpha)^{r-1}\bigl(1-(r-1)\alpha\bigr). 
\]
Since \(F_r(\gamma)\) is differentiable for \(\gamma(\alpha)\in (\theta_{p-1,2},\theta_{p,2})\), a direct differentiation (see Appendix~\ref{APP:derivatives} for details) yields 
\begin{equation}\label{eq:log-derivative}
    \frac{\dd}{\dd\alpha}
\log\left(\frac{F_r(\gamma)}{F_{r-1}(\gamma)^{r/(r-1)}}\right)
= -\frac{r\alpha}{(1+\alpha)(1-(r-2)\alpha)(1-(r-1)\alpha)}<0.
\end{equation}
Since \(F_{r-1}(\gamma)\) is
strictly decreasing as a function of \(\alpha\) on \([0,1/p)\),~\eqref{eq:log-derivative} implies that the
function in~\eqref{eq:ratio-function} is strictly increasing as a function of
\(u=F_{r-1}(\gamma)\) on the interval \((\theta_{p-1,r-1}, \theta_{p,r-1})\).  Since \(\Phi_{r-1}\) is continuous, the strict monotonicity holds on the whole interval
\(\big(1/(r-1)^{r-1},1/(r-1)!\big)\).  This completes the proof.
\end{proof}

Note that the function \eqref{eq:F-def} (on its strictly increasing range) can be viewed as a multi-variable function with variables $r-1\leq p\in \mathbb{N}$ and $0\leq \alpha \leq 1/p$.
As a key step in the proof of~\Cref{THM:clique-lifting}  (i.e., in Claim~\ref{CLAIM:local-general}), we need to extend this function to the interval $0\leq \alpha\leq 1/(r-1)$. 
For this purpose, we define the following extension: for fixed integers \(q \ge r\ge 2\),
\begin{equation}\label{equ:P_r}
        P_r(q,\beta)\coloneqq
        \frac{\binom q r}{q^r}(1+\beta)^{r-1}
        \bigl(1-(r-1)\beta\bigr) \quad \mbox{where} \quad 0\le\beta\le 1/(r-1).
\end{equation} 
Let \(q\ge r+1\). It is clear that the parametric curve \((P_r(q,\beta), P_{r+1}(q,\beta))\) coincides with \(\Phi_r\) for \(0\le \beta \le 1/(q-1)\). The next lemma shows that it actually remains below \(\Phi_r\) for all \(0\le \beta \le 1/r\).

\begin{lemma}\label{LEM:envelope}
Let \(r\ge 2\) and \(q\ge r+1\).  For every \(\beta \in[0,1/r]\),
\begin{equation}\label{eq:envelope}
        \Phi_{r}(P_r(q,\beta))
        \ge P_{r+1}(q,\beta).
\end{equation}
\end{lemma}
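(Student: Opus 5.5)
Since $\Phi_r$ is non-decreasing and $F_{r+1}$ is increasing on its positive range, it suffices to compare the two curves pointwise. Set $u\coloneqq P_r(q,\beta)$. Let $(p,\alpha)$ be the parameters with $u=F_r(\gamma)$ and $\gamma=\frac{p}{2(p+1)}(1-\alpha^2)$, where $p\ge r-1$ and $\alpha\in[0,1/p)$. Then $\Phi_r(u)=P_{r+1}(p+1,\alpha)$, and the goal becomes
\[
P_{r+1}(q,\beta)\le P_{r+1}(p+1,\alpha)\quad\text{whenever}\quad P_r(q,\beta)=P_r(p+1,\alpha).
\]
If $\beta\le 1/(q-1)$, then $(p+1,\alpha)=(q,\beta)$ and there is nothing to prove. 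The case $u\le (1/r)^r$ (equivalently $p=r-1$ and $\Phi_r(u)=0$) needs a separate check that $P_{r+1}(q,\beta)\le 0$ there. This should follow from the fact that $P_{r+1}(q,\beta)>0$ forces $\beta<1/r$, together with a direct comparison.

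The main idea is to treat the parametric curve $\beta\mapsto (P_r(q,\beta),P_{r+1}(q,\beta))$ for each real $q$ and compare slopes, in a log-log form as in Lemma~\ref{LEM:ratio}. Differentiating gives
\[
\frac{\partial_\beta P_r(q,\beta)}{P_r(q,\beta)}=\frac{r-1}{1+\beta}-\frac{r-1}{1-(r-1)\beta}=-\frac{r(r-1)\beta}{(1+\beta)(1-(r-1)\beta)},
\]
and similarly for $P_{r+1}$ with $r$ replaced by $r+1$. Hence the slope
\[
\frac{\dd P_{r+1}}{\dd P_r}=\frac{P_{r+1}}{P_r}\cdot\frac{(r+1)(1-(r-1)\beta)}{(r-1)(1-r\beta)}
\]
depends on $\beta$ only, and not on $q$ except through the ratio $P_{r+1}/P_r=\frac{q-r}{(r+1)q}\cdot\frac{(1+\beta)(1-r\beta)}{1-(r-1)\beta}$. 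This yields the explicit form
\[
\frac{\dd P_{r+1}}{\dd P_r}=\frac{q-r}{(r-1)q}(1+\beta).
\]
The first step is therefore to establish this closed slope formula. For the actual function $\Phi_r$ at the point $(p+1,\alpha)$, it says $\Phi_r'(u)=\frac{p+1-r}{(r-1)(p+1)}(1+\alpha)$ on each concave piece.

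The second step is a monotonicity argument along the extended curve. Start at $\beta_0=1/(q-1)$, where the two curves agree (this is the critical point $\theta_{q-2,r}$, where $F$ is continuous), and increase $\beta$. Then $u=P_r(q,\beta)$ decreases, and $u$ passes into the pieces indexed by $p+1=q-1,q-2,\dots$. Along the way, compare the slope of the extended curve, $\frac{q-r}{(r-1)q}(1+\beta)$, with the slope of $\Phi_r$ at the same $u$, namely $\frac{p+1-r}{(r-1)(p+1)}(1+\alpha)$. Since we move leftwards in $u$, the inequality $P_{r+1}(q,\beta)\le\Phi_r(u)$ is preserved provided the extended curve's slope is at least that of $\Phi_r$. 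So it suffices to show
\[
\frac{q-r}{q}(1+\beta)\ \ge\ \frac{p+1-r}{p+1}(1+\alpha)\quad\text{whenever}\quad P_r(q,\beta)=P_r(p+1,\alpha),\ p+1<q.
\]

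This inequality is the main obstacle. I would first try to show that along a level set $\{P_r(m,\cdot)=u\}$, the quantity $\frac{m-r}{m}(1+\beta(m))$ is non-decreasing in real $m$. To do this, implicitly differentiate $\log P_r(m,\beta)=\text{const}$ in $m$ to get $\beta'(m)$, using the formula for $\partial_\beta \log P_r$ above and the formula for $\partial_m\log\binom{m}{r}m^{-r}$ (a digamma-type sum, or simply $\sum_{j<r}\frac{1}{m-j}-\frac{r}{m}$ for the polynomial form). Then check the sign of the derivative of $\log\frac{m-r}{m}+\log(1+\beta)$. If this direct sign check turns out messy, a fallback is to prove the discrete statement $q\to q-1$ only at the critical points, and use piecewise concavity of $\Phi_r$ on each interval, which is exactly what the slope formula shows since $\alpha$ decreases in $u$. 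A chord-versus-concave-arc argument would then handle interior points.
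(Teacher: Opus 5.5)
Your argument is correct, but it is organized differently from the paper's proof.

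\textbf{The paper's route.} The paper inducts on $q$. For $\beta\in(1/(q-1),1/r]$ it chooses $\beta'<\beta$ with $P_r(q-1,\beta')=P_r(q,\beta)$. It then uses \eqref{eq:slope} to show that the extended $q$-curve lies below the extended $(q-1)$-curve on $[x_1,x_0]$; the two curves meet at $x_0=\theta_{q-2,r}$. Finally it applies the inductive hypothesis to $(q-1,\beta')$.

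\textbf{Your route.} You compare the extended $q$-curve directly with $\Phi_r$ on all of $[P_r(q,\beta),\theta_{q-2,r}]$. This works because on each piece $\Phi_r$ is itself the curve $\alpha\mapsto(P_r(p+1,\alpha),P_{r+1}(p+1,\alpha))$ with $\alpha\in[0,1/p)$, and you integrate a slope inequality across the finitely many pieces in between. In effect this is the paper's induction unrolled. It needs $\Phi_r$ to be continuous and piecewise $C^1$ with derivative \eqref{eq:value-of-lambda}. The paper never differentiates $\Phi_r$; each of its steps compares two smooth curves on one interval.

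\textbf{The step you call the main obstacle.} It is immediate and needs no calculus in a real variable $m$. Suppose $p+1<q$ and $P_r(q,\beta)=P_r(p+1,\alpha)$. Since $\binom{m}{r}/m^r$ is strictly increasing in $m$, we get $P_r(q,\alpha)>P_r(p+1,\alpha)=P_r(q,\beta)$. Hence $\beta>\alpha$, by strict monotonicity of $P_r(q,\cdot)$ on $[0,1/(r-1)]$. Combined with $\frac{q-r}{q}>\frac{p+1-r}{p+1}$, this gives your slope inequality, even strictly. This is exactly the paper's observation $\beta_0>\beta_0'$, applied to non-consecutive indices.

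\textbf{Two minor remarks.}
\begin{itemize}
\item The separate case $u\le(1/r)^r$ is essentially vacuous. One has $P_r(q,1/r)\ge(1/r)^r$, with equality only when $q=r+1$ and $\beta=1/r$, and there $P_{r+1}(q,\beta)=0$. In any case $\Phi_r'=0$ in that region, so your slope argument already covers it.
\item Your fallback chord argument would not work as stated. The extended curve is itself concave in $u$, since its slope $\frac{q-r}{(r-1)q}(1+\beta)$ grows as $u$ decreases. So a comparison at the endpoints of a piece does not control the interior. You do not need the fallback.
\end{itemize}
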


\begin{proof}
Fix \(r\ge 2\).
We use two elementary facts.  First, for \(\widetilde{q}\in\{q-1,q\}\) and \(\beta \in[0,1/r]\),
\begin{equation*}\label{eq:Pr-log-derivative}
        \frac{\partial}{\partial \beta}\log P_r(\widetilde{q},\beta)
        =-\frac{r(r-1)\beta}{(1+\beta)(1-(r-1)\beta)}\le0, 
\end{equation*}
with equality if and only if \(\beta=0\).
Hence, \(P_r(\widetilde{q},\cdot)\) is strictly decreasing for \(\beta\in [0,1/r]\). Second, for fixed \(r\)
and \(\beta\), since the factor \(\binom{n}{r}/n^r=\frac1{r!}\prod_{\ell=0}^{r-1}\left(1-\frac{\ell}{n}\right)\) is strictly increasing in \(n\), we have \(P_r(q-1,\beta)< P_r(q,\beta)\).

Now we prove the result by induction on \(q\).  If \(q=r+1\), then \(1/r=1/(q-1)\), so~\eqref{eq:envelope} holds with equality by the definition of \(F_r\) and \(F_{r+1}\).
Assume now \(q \geq r+2\) and that the result holds for \(q-1\).  If
\(0\le\beta \le1/(q-1)\), then~\eqref{eq:envelope} is again an equality. Hence we may assume
\( 1/(q-1)< \beta \le 1/r \). Let \(x_{0} \coloneqq P_r(q-1,0)=P_r\left(q,1/(q-1)\right)\).  

We claim that there exists a unique \(\beta'\in (0,1/r)\) such that
\begin{equation}\label{eq:beta-prime}
        \beta'<\beta \quad \text{and} \quad x_1 \coloneqq P_r(q-1,\beta')=P_r(q,\beta).
\end{equation}
To see it, using the above two elementary facts, we have \(x_{0}=P_r(q,1/(q-1))>P_r(q,\beta)\), and further
\[
    P_r(q-1,1/r)<P_r(q,1/r)\le P_r(q,\beta)<x_{0}=P_r(q-1,0).
\]
By the intermediate value theorem, there exists \(\beta'\in (0,1/r)\) such that \(P_r(q-1,\beta')=P_r(q,\beta)\).  
Moreover, we have \(P_r(q-1,\beta)<P_r(q,\beta)=P_r(q-1,\beta')\);
since \(P_r(q-1,\cdot)\) is strictly decreasing, we derive \(\beta'<\beta\), as claimed.

For \(\widetilde{q}\in \{q-1,q\}\), let \(Z_{\widetilde{q}}(x)\) denote \(P_{r+1}(\widetilde{q},\beta)\) written as a function of
\(x=P_r(\widetilde{q},\beta)\). Then we have
\begin{equation}\label{eq:slope}
        Z_{\widetilde{q}}'(x)=\frac{\dd P_{r+1}(\widetilde{q},\beta)/\dd \beta}{\dd P_r(\widetilde{q},\beta )/\dd \beta}
        =\frac{\widetilde{q}-r}{\widetilde{q}(r-1)}(1+\beta).\footnote{We refer to Appendix~\ref{APP:derivatives} for a detailed verification.}
\end{equation}
Recall the definition of \(x_0\).
We first note that \(Z_q(x_0)=P_{r+1}(q, 1/(q-1))=P_{r+1}(q-1,0)=Z_{q-1}(x_0)\). 
For \(x<x_0\), define parameters \(\beta_0\) and \(\beta'_0\) so that \(P_r(q,\beta_0)=P_r(q-1,\beta'_0)=x\).
Since \(P_r(q-1,\beta_0)<P_r(q,\beta_0)=P_r(q-1,\beta'_0)\), we have \(\beta_0>\beta'_0\). 
Then using~\eqref{eq:slope}, we see \(Z_q'(x)>Z_{q-1}'(x)\ge 0\) for $x<x_0$. By the Newton--Leibniz formula and the fact \(Z_q(x_0)=Z_{q-1}(x_0)\), we have \(Z_q(x)\le Z_{q-1}(x)\) for every \(x<x_{0}\). In particular, since \(x_{1}=P_r(q,\beta)\le x_{0}\),
\begin{equation}\label{eq:beta-beta'}
    P_{r+1}(q,\beta)=Z_q(x_{1})\le Z_{q-1}(x_{1})=P_{r+1}(q-1,\beta').
\end{equation}
Note that \(q-1\geq r+1\). Using induction on \(q-1\), we complete the proof by deriving the desired inequality
\[
        \Phi_{r}(P_r(q,\beta))
        \overset{\eqref{eq:beta-prime}}{=}\Phi_{r}(P_r(q-1,\beta'))
        \ge P_{r+1}(q-1,\beta')
        \overset{\eqref{eq:beta-beta'}}{\ge} P_{r+1}(q,\beta). \qedhere
\]
\end{proof}

The next lemma is a weighted graph version of a classic clique density inequality (see~\cite{lovasz1983number}). 
For completeness, we include a self-contained proof in Appendix~\ref{APP:quadratic}. 

\begin{lemma}\label{LEM:quadratic-MM}
For every integer \(r\ge2\) and every weighted graph \(\GG\),
\[
        r^2\GG(K_r)^2
        \le
        \GG(K_{r-1})\Bigl(\GG(K_r)+(r^2-1)\GG(K_{r+1})\Bigr).
\]
\end{lemma}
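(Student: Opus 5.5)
We plan to prove Lemma~\ref{LEM:quadratic-MM} by a Cauchy--Schwarz argument over $(r-1)$-cliques, in the spirit of the classical Moon--Moser inequality. For each $T\in\binom{[n]}{r-1}$ set
\[
w_T\coloneqq \prod_{e\in\binom{T}{2}}a_e\prod_{i\in T}x_i, \qquad d_T\coloneqq \sum_{v\notin T} x_v\prod_{u\in T}a_{uv}.
\]
Thus $w_T$ is the weight of $T$ and $d_T$ is its weighted common neighbourhood. Each $r$-clique $S$ contains exactly $r$ subsets $T$ of size $r-1$. This gives the first identity
\[
\sum_T w_T d_T = r\,\GG(K_r),
\]
and, since $\sum_T w_T=\GG(K_{r-1})$, Cauchy--Schwarz yields
\[
r^2\GG(K_r)^2\le \GG(K_{r-1})\sum_T w_T d_T^2.
\]
The whole lemma therefore reduces to the bound
\[
\sum_T w_T d_T^2\le \GG(K_r)+(r^2-1)\GG(K_{r+1}).
\]

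To prove this, expand $d_T^2$ as a sum over ordered pairs $(v,v')$ of vertices outside $T$.

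\textbf{Diagonal terms.} The terms with $v=v'$ give $\sum_{v\notin T} x_v^2\prod_{u\in T}a_{uv}$. Using $x_v^2\le x_v$ and $a\le 1$, their total is at most $\sum_T\sum_{v}w_Tx_v\prod_{u\in T}a_{uv}=r\GG(K_r)$. This is too big by a factor $r$, so the crude bound must be refined.

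\textbf{Off-diagonal terms.} The terms with $v\ne v'$ give
\[
2\sum_{\{v,v'\}} x_vx_{v'}\prod_{u\in T}a_{uv}a_{uv'}.
\]
This equals a "$K_{r+1}$ minus an edge" count. Inserting the missing edge weight $a_{vv'}$ would give $(r+1)r\,\GG(K_{r+1})$. The factor arises because each $(r+1)$-set has $\binom{r+1}{2}$ choices of the pair $\{v,v'\}$ and the ordered count supplies a factor $2$. The excess $r^2+r$ versus the target $r^2-1$ again indicates that the real proof is subtler.

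A cleaner route is to fix $T$ and view $d_T^2$ as a quadratic form in the vector $y=(x_v\prod_{u\in T}a_{uv})_v$. The standard trick (as in Lov\'asz--Simonovits) compares, for each $T$, the quantities
\[
d_T^2=\Bigl(\sum_v y_v\Bigr)^2 \quad\text{and}\quad \sum_{v\ne v'}y_vy_{v'}a_{vv'},
\]
using the inequality $\sum_{v\ne v'}y_vy_{v'}(1-a_{vv'})\le\dots$. This is not obviously correct as stated, so the plan must be corrected.

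The corrected approach is to run the standard Moon--Moser proof directly in the weighted setting. For each $r$-set $S$ and each vertex $v\notin S$, let $N_S(v)$ count the non-edges from $v$ to $S$, measured by the weights $1-a$. The classical counting says that every vertex outside an $r$-clique misses at least one of its vertices unless it completes an $(r+1)$-clique. I would write
\[
\sum_{T}w_T d_T^2-\GG(K_r)-(r^2-1)\GG(K_{r+1})
\]
as a sum over pairs (an $r$-clique $S$, a vertex $v$) of a nonpositive expression, using multilinearity in each $a_e$ and each $x_i$. Since the expression is multilinear in the edge weights $a_e$, it suffices to check the extreme values $a_e\in\{0,1\}$. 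In the vertex weights, homogeneity together with $\sum x_i=1$ (used via $\GG(K_r)=\GG(K_r)\cdot\sum_i x_i$) lets us reduce to a polynomial identity and inequality. That identity and inequality are the classical $0/1$ Moon--Moser double count.

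The main obstacle is handling the diagonal terms $x_v^2$ correctly. In the unweighted setting these terms vanish or are absorbed, but in the weighted setting they must be bounded by borrowing from $\GG(K_r)\sum_i x_i$. I would first try to make this step work by homogenizing the target inequality with the factor $\sum_i x_i=1$ and then comparing coefficients monomial by monomial.
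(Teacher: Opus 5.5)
Your Cauchy--Schwarz reduction over $(r-1)$-sets is exactly the paper's, with $w_T=A_TX_T$ and $d_T=\eta_T$. The device you settle on at the end is also what the paper does: homogenize $\GG(K_r)$ with the factor $\sum_i x_i=1$, then compare coefficients monomial by monomial. So the route is right, but the proposal stops before the step that actually carries the lemma, and several remarks on the way are inaccurate.
\begin{itemize}
\item The expression is not multilinear in the $x_i$. The diagonal of $d_T^2$ carries $x_v^2$, and also $\prod_{u\in T}a_{uv}^2$ rather than $\prod_{u\in T}a_{uv}$.
\item The natural grouping is by $(r+1)$-sets, not by pairs ($r$-clique, outside vertex).
\item Bounding the diagonal on its own via $x_v^2\le x_v$ is a dead end. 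It discards precisely the negative term that must cancel the excess ($r^2+r$ versus $r^2-1$) you noticed off the diagonal. The diagonal and off-diagonal parts have to be combined, not estimated separately.
\end{itemize}

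Concretely, here is the argument to write out.
\begin{itemize}
\item \emph{Diagonal part.} Using $a_e^2\le a_e$, it is at most $\sum_{Q\in\binom{[n]}{r}}A_QX_Q\sum_{i\in Q}x_i$. Writing $\sum_{i\in Q}x_i=1-\sum_{i\notin Q}x_i$ turns this into $\GG(K_r)-\sum_{M\in\binom{[n]}{r+1}}C_MX_M$, where $C_M=\sum_{N\in\binom{M}{r}}A_N$.
\item \emph{Off-diagonal part.} It equals $2\sum_M B_MX_M$, where $B_M=\sum_{e\in\binom{M}{2}}\prod_{f\in\binom{M}{2}\setminus\{e\}}a_f$.
\item \emph{Reduction.} It therefore suffices to prove $2B_M-C_M\le (r^2-1)A_M$ for every $(r+1)$-set $M$.
\item \emph{Verification.} This inequality is multilinear in the edge weights, so it is enough to check $a_e\in\{0,1\}$.
  \begin{itemize}
  \item Two or more missing pairs: $B_M=A_M=0$, and the inequality reads $-C_M\le 0$.
  \item Exactly one missing pair: $2B_M=2=C_M$ and $A_M=0$.
  \item No missing pair: $2B_M-C_M=r(r+1)-(r+1)=r^2-1$.
  \end{itemize}
\end{itemize}
The $-C_M$ term is what absorbs the discrepancy you flagged. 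This case check is the Moon--Moser count you allude to, and once it is written out your argument coincides with the paper's proof in Appendix~\ref{APP:quadratic}.
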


In the following lemma, we prove Theorem~\ref{THM:clique-lifting} for all critical points (i.e., when \(\GG(K_r)=\theta_{p,r}\) for \(p\ge r\ge 3\)), assuming that Theorem~\ref{THM:clique-lifting} holds for \(r-1\). 

\begin{lemma}\label{LEM:critical-values}
Let \(p\ge r \ge 3\) and assume Theorem~\ref{THM:clique-lifting} holds for \(r-1\).   If \(\GG\) is a weighted graph with \(\GG(K_r)=\theta_{p,r}\), then \(\GG(K_{r+1})\ge \theta_{p,r+1}\).
\end{lemma}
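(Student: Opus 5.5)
The plan is to combine Lemma~\ref{LEM:quadratic-MM} (applied with the same \(r\)) with an upper bound on \(\GG(K_{r-1})\) coming from the induction hypothesis. Write \(q\coloneqq p+1\ge r+1\), so that \(\theta_{p,j}=\binom{q}{j}/q^j\) for every \(j\).

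\textbf{Step 1: bound \(\GG(K_{r-1})\) from above.} By hypothesis Theorem~\ref{THM:clique-lifting} holds for \(r-1\), so
\[
\theta_{p,r}=\GG(K_r)\ge \Phi_{r-1}\bigl(\GG(K_{r-1})\bigr)=F_r\bigl(F_{r-1}^{-1}(\GG(K_{r-1}))\bigr).
\]
By~\eqref{eq:theta}, \(\theta_{p,r}=F_r\bigl(\tfrac{p}{2(p+1)}\bigr)\). Since \(\theta_{p,r}>0\) and \(F_r\) is strictly increasing wherever it is positive, we get \(F_{r-1}^{-1}(\GG(K_{r-1}))\le \tfrac{p}{2(p+1)}\). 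The right-hand side is at least the threshold \(\frac{r-2}{2(r-1)}\), because \(p\ge r\). Applying the non-decreasing map \(F_{r-1}\) and using~\eqref{eq:theta} again (valid since \(p\ge r-2\)) gives
\[
\GG(K_{r-1})\le F_{r-1}\bigl(\tfrac{p}{2(p+1)}\bigr)=\theta_{p,r-1}.
\]
A little care is needed with the generalized inverse at \(0\), but \(\GG(K_{r-1})\) is clearly positive here, since \(\GG(K_r)>0\) forces \(\GG(K_{r-1})>0\). I expect this monotonicity bookkeeping to be the only delicate point; the rest is algebra.

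\textbf{Step 2: apply Lemma~\ref{LEM:quadratic-MM}.} The lemma gives
\[
r^2\theta_{p,r}^2\le \GG(K_{r-1})\bigl(\theta_{p,r}+(r^2-1)\GG(K_{r+1})\bigr)\le \theta_{p,r-1}\bigl(\theta_{p,r}+(r^2-1)\GG(K_{r+1})\bigr).
\]
Here the second factor is nonnegative, so replacing \(\GG(K_{r-1})\) by its upper bound from Step 1 only increases the right-hand side. Rearranging yields
\[
\GG(K_{r+1})\ge \frac{1}{r^2-1}\left(\frac{r^2\theta_{p,r}^2}{\theta_{p,r-1}}-\theta_{p,r}\right).
\]

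\textbf{Step 3: check the right-hand side equals \(\theta_{p,r+1}\).} From the binomial ratio we have \(\theta_{p,r}/\theta_{p,r-1}=\frac{q-r+1}{rq}\). Hence the bracket equals
\[
\theta_{p,r}\cdot\frac{r(q-r+1)-q}{q}=\theta_{p,r}\cdot\frac{(r-1)(q-r)}{q}.
\]
Dividing by \(r^2-1\) gives \(\theta_{p,r}\frac{q-r}{(r+1)q}\). This is exactly \(\binom{q}{r+1}/q^{r+1}=\theta_{p,r+1}\), which completes the argument. The equality is consistent with the Turán graph \(T_{q}\) being tight in Lemma~\ref{LEM:quadratic-MM}.
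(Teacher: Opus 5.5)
Your proposal is correct and follows the paper's proof: the induction hypothesis for \(r-1\) gives \(\GG(K_{r-1})\le\theta_{p,r-1}\), Lemma~\ref{LEM:quadratic-MM} is applied, and the algebra then yields exactly \(\theta_{p,r+1}\). Your Step~1 is slightly more careful than the paper's one-line appeal to \(\Phi_{r-1}^{-1}\), since you route the monotonicity through the strictly increasing part of \(F_r\) and the identity \(F_{r-1}\circ F_{r-1}^{-1}=\mathrm{id}\).
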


\begin{proof}
By the assumption that Theorem~\ref{THM:clique-lifting} holds for \(r-1\), we have \(\theta_{p,r}=\GG(K_r)\ge \Phi_{r-1}(\GG(K_{r-1})).\)
Since \(\Phi_{r-1}\) is non-decreasing, we obtain 
\(\GG(K_{r-1})\le \Phi_{r-1}^{-1}\left(\theta_{p,r}\right)=\theta_{p,r-1}.
\)
By Lemma~\ref{LEM:quadratic-MM}, we have
\[
        r^2\GG(K_r)^2
        \le
        \GG(K_{r-1})\Bigl(\GG(K_r)+(r^2-1)\GG(K_{r+1})\Bigr).
\]
As \(\GG(K_r)=\theta_{p,r}>0\) and \(\GG(K_{r-1})\le \theta_{p,r-1}\), it follows that
\(
r^2\theta_{p,r}^2\le\theta_{p,r-1}\bigl(\theta_{p,r}+(r^2-1)\GG(K_{r+1})\bigr).
\)
Therefore,
\[
        \GG(K_{r+1})
        \ge
        \frac{r^2\theta_{p,r}^2/\theta_{p,r-1}-\theta_{p,r}}{r^2-1}
        =
        \frac{\binom{p+1}{r}}{(p+1)^r}\cdot
        \frac{p+1-r}{(p+1)(r+1)}
        =
        \frac{\binom{p+1}{r+1}}{(p+1)^{r+1}}
        =\theta_{p,r+1}. \qedhere
\]
\end{proof}

In the final lemma of this section, we gather several useful properties of a minimal counterexample to Theorem~\ref{THM:clique-lifting}, which will be used later. 

\begin{lemma}\label{LEM:reduction}
Let \(r\ge3\) and assume that Theorem~\ref{THM:clique-lifting} holds for \(r-1\).  Suppose that there is a weighted graph \(\GG_0\) such that \(\Phi_r(\GG_0(K_r))>0\) and \(\GG_0(K_{r+1})<\Phi_r(\GG_0(K_r))\).
Then there exists a weighted graph \(\GG=\GG(\vec{\mathbf{x}}, \bA)\) satisfying the two inequalities above and the following properties:
\begin{enumerate}[label={\(\arabic*\)}]
    \item[(1).] All \(x_i\)'s are positive;
    \item[(2).] \(\GG(K_r)=F_r(\gamma)\) for some $\gamma=\frac{p}{2(p+1)}(1-\alpha^2)$ with $r\le p\in\mathbb{N}$ and $0<\alpha<1/p$;
    \item[(3).] \(\GG\) is a local maximizer of \(\Phi_r(\GG(K_r))-\GG(K_{r+1})\), subject to the constraints \(\sum_i x_i=1\) and \(0\le a_{ij}\le 1\).
\end{enumerate} 
\end{lemma}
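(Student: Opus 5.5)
The plan is a compactness argument combined with a reduction to the minimum order, followed by ruling out degenerate values of \(\GG(K_r)\).

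\emph{Step 1: choose a minimal counterexample.} Let \(n_0\) be the minimum order of a weighted graph \(\GG'\) with \(\Phi_r(\GG'(K_r))>0\) and \(\GG'(K_{r+1})<\Phi_r(\GG'(K_r))\). Such a graph exists because \(\GG_0\) is one. Consider the set \(\mathcal{W}\) of weighted graphs of order \(n_0\), that is, pairs \((\vec{\mathbf{x}},\bA)\) in the simplex times \([0,1]^{\binom{n_0}{2}}\). This set is compact. The function
\[
D(\GG')=\Phi_r(\GG'(K_r))-\GG'(K_{r+1})
\]
is continuous on \(\mathcal{W}\), since \(\Phi_r\) is continuous and clique densities are polynomials. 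So \(D\) attains a maximum at some \(\GG\). That maximum is positive, because a counterexample of order \(n_0\) exists. Hence \(\GG\) satisfies both inequalities (\(D>0\) together with \(\GG(K_{r+1})\ge0\) gives \(\Phi_r(\GG(K_r))>0\)).

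Since \(\GG\) is a global maximizer over the constraint set, it is in particular a local maximizer, which gives (3).

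\emph{Step 2: property (1).} Suppose some \(x_i=0\). Deleting vertex \(i\) gives a weighted graph of order \(n_0-1\) with the same vertex weights on the remaining vertices, still summing to \(1\). Every clique containing \(i\) contributes \(0\), so all clique densities are unchanged. This produces a smaller counterexample, contradicting the minimality of \(n_0\).

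The case \(n_0=1\) cannot occur, since then \(\GG(K_r)=0\) and \(\Phi_r(0)=0\). Note that we fix \(n_0\) first and only then maximize within order \(n_0\). Points of \(\mathcal{W}\) with some zero weight are allowed in the maximization, but by this argument the maximizer has all \(x_i>0\).

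\emph{Step 3: property (2).} Write \(\GG(K_r)=F_r(\gamma)\) with \(\gamma=F_r^{-1}(\GG(K_r))\). Since \(\Phi_r(\GG(K_r))>0\), we have \(\gamma>\frac{r-1}{2r}\), so \(\gamma=\frac{p}{2(p+1)}(1-\alpha^2)\) with \(p\ge r\) and \(\alpha\in[0,1/p)\). It remains to exclude \(\alpha=0\).

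If \(\alpha=0\), then \(\GG(K_r)=\theta_{p,r}\) with \(p\ge r\). Lemma~\ref{LEM:critical-values}, applied using the hypothesis that Theorem~\ref{THM:clique-lifting} holds for \(r-1\), gives \(\GG(K_{r+1})\ge\theta_{p,r+1}\). But \(\theta_{p,r+1}=F_{r+1}(\gamma)=\Phi_r(\GG(K_r))\), which contradicts \(D(\GG)>0\). Therefore \(0<\alpha<1/p\).

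The main point requiring care is Step 3: the critical points must be excluded, and this relies on Lemma~\ref{LEM:critical-values}. Step 2 also needs the minimum order to be fixed before maximizing over a compact set, as done above. The rest is routine.
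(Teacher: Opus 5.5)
Your proof is correct and follows the paper's argument essentially step by step. Like the paper, you fix the minimum order of a counterexample and use compactness to take a global maximizer of \(\Phi_r(\GG(K_r))-\GG(K_{r+1})\) over weighted graphs of that order. You then delete zero-weight vertices to get (1), and rule out the critical values \(\GG(K_r)=\theta_{p,r}\) via Lemma~\ref{LEM:critical-values} to force \(0<\alpha<1/p\). The paper also remarks that \(\Phi_r\) is consequently differentiable at \(\GG(K_r)\); this is needed later for the Lagrange multiplier step but is not part of the statement.
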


\begin{proof}
For any weighted graph \(\GG\), let
\[
    f(\GG)\coloneqq\Phi_r(\GG(K_r))-\GG(K_{r+1}).
\]
Among all weighted graphs \(\GG\) satisfying \(\Phi_r(\GG(K_r))>0\) and \(f(\GG)>0\), choose one of minimum order \(n\).  By compactness, the continuous function \(f\) attains its maximum over the space of weighted graphs of order \(n\). Let \(\GG\) be such a maximizer and clearly \(f(\GG)>0\).

First, if \(x_i=0\) for some \(i\in [n]\), deleting the vertex \(i\) leaves all clique densities, and hence the value of \(f\), unchanged. Thus the resulting weighted graph \(\GG'\) satisfies \(\Phi_r(\GG'(K_r))>0\) and \(f(\GG')>0\), contradicting the minimal choice of \(n\).  So every vertex weight of \(\GG\) is positive.

If \(\Phi_r(\GG(K_r))=0\), then \(f(\GG)=-\GG(K_{r+1})\le0 \),
contrary to \(f(\GG)>0\).
Hence we have \(\Phi_r(\GG(K_r))>0\).
If \(\GG(K_r)=\theta_{p,r}\), then Lemma~\ref{LEM:critical-values} gives
\[
        \GG(K_{r+1})\ge \theta_{p,r+1}
        =
        \Phi_r(\theta_{p,r})
        =
        \Phi_r(\GG(K_r)),
\]
and hence \(f(\GG)\le 0\), again a contradiction. So \(\GG(K_r)\) cannot equal \(\theta_{p,r}\) for any \(p\ge r\). 
So \( \gamma\coloneqq F_r^{-1}(\GG(K_r))\in
        \left(\frac{r-1}{2r},\frac12\right)
        \setminus
        \left\{\frac{p}{2(p+1)}:p\ge r\right\}.\)
Equivalently, \(\GG(K_r)=F_r(\gamma)\) with \(\gamma=\frac{p}{2(p+1)}(1-\alpha^2)\), where \(r\leq p\in \mathbb{N}\) and \(0<\alpha<1/p\) are unique.
Therefore, \(\Phi_r\) is differentiable at \(\GG(K_r)\), and the global maximizer is in particular a local maximum of \(f\) subject to the constraint \(\sum_i x_i=1\) and \(0\le a_{ij}\le 1\). This completes the proof.
\end{proof}

\section{Proof of Theorem~\ref{THM:clique-lifting}}\label{SEC:MainProof}
\noindent In this section, we aim to prove Theorem~\ref{THM:clique-lifting} by induction on \(r\). That is, \(\GG(K_{r+1})\ge \Phi_r(\GG(K_r))\) holds for every \(r\ge2\) and every weighted graph \(\GG\).
The base case \(r=2\) is precisely~\Cref{THM:Nikiforov-weighted}. 
Throughout the remainder of this section, we assume \(r\ge3\) and that the statement holds for all smaller values. 

Suppose, for a contradiction, that \eqref{equ:clique-lifting} fails for some weighted graph. 
By Lemma~\ref{LEM:reduction}, there exists a weighted graph \(\GG=\GG(\vec{\mathbf{x}}, \bA)\) of order \(n\), 
which attains a local maximum of \(\Phi_{r}(\GG(K_r))-\GG(K_{r+1})>0\), has all vertex weights positive, and admits parameters \(p\ge r\) and \(0<\alpha<1/p\) such that, writing \(\gamma=\frac{p}{2(p+1)}(1-\alpha^2)\), 
we have \(\GG(K_r)=F_r(\gamma)\) and, hence, \(\GG(K_{r+1})< \Phi_r(\GG(K_r))=\Phi_r(F_r(\gamma))=F_{r+1}(\gamma)\). 
At the point \(x_0\coloneqq F_r(\gamma)\), differentiating \(\Phi_r=F_{r+1}\circ F_r^{-1}\) via the chain rule and the \(\alpha\)-parametrization (see Appendix~\ref{APP:derivatives} for details) yields
\begin{equation}
        \lambda=\Phi_r'(x_0)=\frac{p-r+1}{(r-1)(p+1)}(1+\alpha). \label{eq:value-of-lambda}
\end{equation}

Since \(\GG\) is a local maximum of \(\Phi_r(\GG(K_r))-\GG(K_{r+1})\) under \(\sum_i x_i=1\), Lagrange multipliers, together with the first equation of \eqref{eq:derivative-identities}, yield that there exists a constant \(\mu\) such that 
\begin{equation}\label{eq:vertex-general}
        \GG_i(K_{r})=\lambda \GG_i(K_{r-1})-\mu,
\end{equation}
for all \(i\in [n]\). 
By multiplying~\eqref{eq:vertex-general} by \(x_i\) and summing over \(i\in [n]\), we derive that
\[
        \mu=r\lambda \GG(K_r)-(r+1)\GG(K_{r+1}).
\]
\noindent\text{Now we set}\hfill
\(\displaystyle
\mu_0 \coloneqq  r\lambda F_r(\gamma)-(r+1)F_{r+1}(\gamma)
= \frac{r+1}{r-1}
\frac{1}{(p+1)^{r+1}}\binom{p+1}{r+1}(1+\alpha)^r.
\)\hfill\null

\smallskip
\noindent Since \(\GG(K_r)=F_{r}(\gamma)\) and \(\GG(K_{r+1})<F_{r+1}(\gamma)\), we obtain \(\mu>\mu_{0}>0\). Define
\begin{equation*}
        M\coloneqq\frac{\mu}{\mu_{0}}>1 \quad \text{and}\quad \eta_i\coloneqq \frac{\lambda}{\mu}\cdot \GG_i(K_{r-1}).
\end{equation*}

We need a crucial estimate for the weighted degree $\rho_i\coloneqq\GG_i(K_1)$ for every vertex $i\in [n]$ (see Claim~\ref{CLAIM:local-general}) for later proofs.
To proceed, we first provide the following estimates on the value of \(\eta_{i}\). 

\begin{claim}\label{CLAIM:range-of-eta}
    For every vertex \(i\in [n]\), we have \(1\le\eta_i\le r\).
\end{claim}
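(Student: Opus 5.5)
The plan is to read both inequalities off first-order optimality conditions of the local maximizer \(\GG\) from Lemma~\ref{LEM:reduction}: the lower bound from the vertex condition~\eqref{eq:vertex-general}, the upper bound from the conditions for the edge weights \(a_{ij}\). Throughout, \(\mu>\mu_0>0\) and \(\lambda>0\), so dividing by them does not change the direction of any inequality.

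\emph{Lower bound.} By~\eqref{eq:vertex-general}, \(\lambda\GG_i(K_{r-1})-\mu=\GG_i(K_r)\ge 0\), since every rooted clique density is a sum of nonnegative terms. Hence \(\lambda\GG_i(K_{r-1})\ge\mu\), which is exactly \(\eta_i\ge 1\).

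\emph{Upper bound.} Substituting \(\mu=\lambda\GG_i(K_{r-1})-\GG_i(K_r)\), the inequality \(\eta_i\le r\) becomes \(\lambda\GG_i(K_{r-1})\le r\bigl(\lambda\GG_i(K_{r-1})-\GG_i(K_r)\bigr)\). This is equivalent to
\[
r\,\GG_i(K_r)\le (r-1)\lambda\,\GG_i(K_{r-1}).
\]
To prove it, I will use local maximality of \(f=\Phi_r(\GG(K_r))-\GG(K_{r+1})\) in the edge variables. By Lemma~\ref{LEM:reduction}, \(\Phi_r\) is differentiable at \(\GG(K_r)\) with derivative \(\lambda\). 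By the second identity in~\eqref{eq:derivative-identities},
\[
\frac{\partial f}{\partial a_{ij}}=x_ix_j\bigl(\lambda\GG_{ij}(K_{r-2})-\GG_{ij}(K_{r-1})\bigr).
\]
Whenever \(a_{ij}>0\), we can decrease \(a_{ij}\) slightly while staying feasible. Local maximality therefore forces \(\partial f/\partial a_{ij}\ge 0\). Since \(x_ix_j>0\) by property (1), this gives \(\GG_{ij}(K_{r-1})\le\lambda\GG_{ij}(K_{r-2})\) for every \(j\) with \(a_{ij}>0\).

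Next I need the double-counting identity
\[
k\,\GG_i(K_k)=\sum_{j\ne i}a_{ij}x_j\,\GG_{ij}(K_{k-1}).
\]
It holds because each \(k\)-set \(S\) counted in \(\GG_i(K_k)\) is counted once for each choice of a distinguished vertex \(j\in S\). I apply it with \(k=r\) and with \(k=r-1\). The terms with \(a_{ij}=0\) vanish, so the edge inequality applies to every surviving term, and
\[
r\,\GG_i(K_r)=\sum_j a_{ij}x_j\GG_{ij}(K_{r-1})\le\lambda\sum_j a_{ij}x_j\GG_{ij}(K_{r-2})=(r-1)\lambda\,\GG_i(K_{r-1}).
\]
This is the required inequality, so \(\eta_i\le r\).

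The main point to get right is the one-sided edge condition: only a decrease of \(a_{ij}\) is always feasible, which is why the argument uses only edges with \(a_{ij}>0\). One should also confirm that local maximality in Lemma~\ref{LEM:reduction}(3) is with respect to the edge weights as well as the vertex weights, and that the identity is indexed consistently for \(r\ge3\). With these in place the rest is routine.
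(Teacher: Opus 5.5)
Your proof is correct and follows essentially the same argument as the paper. The lower bound comes from the vertex Lagrange condition \eqref{eq:vertex-general} together with \(\mu>0\). The upper bound comes from the one-sided edge condition \(\GG_{ij}(K_{r-1})\le\lambda\GG_{ij}(K_{r-2})\) for \(a_{ij}>0\), multiplied by \(x_ja_{ij}\) and summed using \(k\,\GG_i(K_k)=\sum_{j\ne i}a_{ij}x_j\GG_{ij}(K_{k-1})\).
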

\begin{proof}
    Since \(\mu(\eta_{i}-1)=\GG_i(K_r)\ge 0\) and \(\mu>0\), it follows that \(\eta_i\ge1\). 
    If \(a_{ij}>0\), since \(\GG\) is a local maximum, decreasing \(a_{ij}\) cannot increase \(\Phi_r(\GG(K_{r}))-\GG(K_{r+1})\). 
    Using the second equation of \eqref{eq:derivative-identities}, we obtain 
    \begin{equation}\label{eq:edge-general}
        \GG_{ij}(K_{r-1})\le \lambda \GG_{ij}(K_{r-2}) \quad \text{for all} \quad  a_{ij}>0.
    \end{equation}
    By multiplying~\eqref{eq:edge-general} by \(x_{j}a_{ij}\) and summing over all \(j\), we derive that
\[
        r\mu (\eta_{i}-1)=r \GG_i(K_r)
        =\sum_{j\neq i} x_j a_{ij}\GG_{ij}(K_{r-1})
        \le \lambda\sum_{j\neq i} x_j a_{ij}\GG_{ij}(K_{r-2})
        =(r-1)\lambda \GG_i(K_{r-1})=(r-1)\mu \eta_{i}, 
\]
which implies \(\eta_i\le r\). This completes the proof of Claim~\ref{CLAIM:range-of-eta}. 
\end{proof}

Now we are ready to prove the following lower bound on the weighted degree \(\rho_{i}=\GG_i(K_1)\). 

\begin{claim}\label{CLAIM:local-general}
For every vertex \(i\in [n]\), we have
\begin{equation}
        \rho_{i}\ge
        \frac{p(1+\alpha)}{p+1}M^{1/(r-1)}
        \cdot \frac{\eta_i+r(r-2)}{r(r-1)}. \label{eq:lower-bound-of-degree}
\end{equation}
\end{claim}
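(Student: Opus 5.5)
The plan is to combine the induction hypothesis, in the form \eqref{eq:outline}, with the monotonicity in Lemma~\ref{LEM:ratio} and the envelope bound of Lemma~\ref{LEM:envelope} (applied with \(r-1\) in place of \(r\) and \(q=p+1\ge r\)).

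First, I rewrite everything in terms of \(\eta_i\). By \eqref{eq:vertex-general} and the definition of \(\eta_i\),
\[
\GG_i(K_{r-1})=\frac{\mu\eta_i}{\lambda},\qquad \GG_i(K_r)=\mu(\eta_i-1),\qquad \mu=M\mu_0,
\]
where \(\lambda\) and \(\mu_0\) are the explicit expressions in \(p,\alpha\) computed above. Writing \(u=\GG_i(K_{r-1})/\rho_i^{r-1}\), inequality \eqref{eq:outline} becomes
\[
\GG_i(K_r)\ \ge\ \GG_i(K_{r-1})^{r/(r-1)}\cdot \frac{\Phi_{r-1}(u)}{u^{r/(r-1)}}.
\]
By Lemma~\ref{LEM:ratio}, the right-hand side is non-decreasing in \(u\), and strictly increasing once it is positive. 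Since \(u\) decreases in \(\rho_i\), the right-hand side is non-increasing in \(\rho_i\). So it suffices to show the following. Let \(\rho^*\) denote the right-hand side of \eqref{eq:lower-bound-of-degree} and \(u^*=\GG_i(K_{r-1})/(\rho^*)^{r-1}\). Then
\[
(\rho^*)^r\,\Phi_{r-1}(u^*)\ \ge\ \mu(\eta_i-1),
\]
with strict inequality for every \(\rho_i<\rho^*\). For \(\eta_i=1\) the left side must be handled via strict positivity. Alternatively, for \(\rho_i<\rho^*\) I would obtain a strict contradiction through the strict monotonicity in Lemma~\ref{LEM:ratio}, together with \(M>1\).

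Second, to evaluate \(\Phi_{r-1}(u^*)\) from below, I parametrize by the extension \eqref{equ:P_r}. Choose \(\beta\) so that \(u^*=P_{r-1}(p+1,\beta)\). By the form of \(\rho^*\), I expect \(\beta\) to be essentially linear in \(\eta_i\), with the factor \(\frac{p(1+\alpha)}{p+1}M^{1/(r-1)}\) absorbing the scaling of \(\GG_i(K_{r-1})=\mu\eta_i/\lambda\) against \((p+1)^{-(r-1)}\binom{p+1}{r-1}\). Claim~\ref{CLAIM:range-of-eta}, i.e. \(1\le\eta_i\le r\), should be exactly what guarantees \(\beta\in[0,1/(r-1)]\), the range where Lemma~\ref{LEM:envelope} applies. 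If the parametrization overshoots, I would use \(P_{r-1}\) decreasing in \(\beta\) and monotonicity of \(\Phi_{r-1}\). Lemma~\ref{LEM:envelope} then gives \(\Phi_{r-1}(u^*)\ge P_r(p+1,\beta)\). So the claim reduces to the explicit inequality
\[
(\rho^*)^r P_r(p+1,\beta)\ \ge\ M\mu_0(\eta_i-1).
\]

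Third, I verify this inequality. The powers of \(M\) give \(M^{r/(r-1)}\) on the left against \(M\) on the right, so \(M\ge1\) can be used to drop the surplus \(M^{1/(r-1)}\ge1\). What remains is an inequality in \(\eta_i\) (equivalently \(\beta\)) and \(\alpha,p\). After substituting \(\lambda\) from \eqref{eq:value-of-lambda} and the formula for \(\mu_0\), the binomial factors should cancel via \(\binom{p+1}{r}\frac{p+1-r}{r+1}=\binom{p+1}{r+1}\). This leaves a one-variable polynomial inequality of the form
\[
\Bigl(\tfrac{\eta+r(r-2)}{r(r-1)}\Bigr)^{r}(1+\beta)^{r-1}\bigl(1-(r-1)\beta\bigr)\ \gtrsim\ c\,(\eta-1)
\]
on \(\eta\in[1,r]\). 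I expect this final step to be the main obstacle: pinning down the correct \(\beta(\eta_i)\) so that the constants match exactly, and then proving the resulting polynomial inequality. I would first check the endpoints \(\eta_i=1\) and \(\eta_i=r\), where equality is plausibly tight, and then attempt the interior via concavity in \(\beta\) of \(\log\bigl[(1+\beta)^{r-1}(1-(r-1)\beta)\bigr]\) or via weighted AM--GM.
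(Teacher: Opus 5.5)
\textbf{There is a genuine gap: with your choice \(q=p+1\) in Lemma~\ref{LEM:envelope}, the inequality you reduce to is false.} Your outer structure does match the paper's. You assume \(\rho_i<\tau_i\), use Lemma~\ref{LEM:ratio} to move from \(\rho_i\) to \(\tau_i\), and bound \(\Phi_{r-1}\) from below by Lemma~\ref{LEM:envelope}. But the step you leave open is where the proof actually lives.

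\textbf{What the paper does.} It takes \(q=p\) together with the explicit parameter \(\beta=\frac{r-\eta_i}{\eta_i+r(r-2)}\), which lies in \([0,\frac1{r-1}]\) by Claim~\ref{CLAIM:range-of-eta}. The factor \(\frac{p}{p+1}\) in \(\tau_i\) is exactly what converts the \((p+1)\)-normalization into a \(p\)-normalization. Heuristically, a vertex's neighbourhood in the \((p+1)\)-partite extremal graph is \(p\)-partite. Write \(L_i=\frac{\eta_i+r(r-2)}{r(r-1)}\). Then \(1+\beta=1/L_i\), \(1-(r-2)\beta=\eta_i/(rL_i)\) and \(1-(r-1)\beta=(\eta_i-1)/((r-1)L_i)\). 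These give the exact identities \(\GG_i(K_{r-1})/\tau_i^{r-1}=P_{r-1}(p,\beta)\) and \(\GG_i(K_r)/\tau_i^{r}=M^{-1/(r-1)}P_r(p,\beta)\). So the ``polynomial inequality'' you expected to be the main obstacle is an identity for every \(\eta_i\), not just at the endpoints. After Lemma~\ref{LEM:envelope} one gets \(\tau_i^r\Phi_{r-1}\bigl(\GG_i(K_{r-1})/\tau_i^{r-1}\bigr)\ge M^{1/(r-1)}\GG_i(K_r)\). The only slack at this step is the factor \(M^{1/(r-1)}>1\), which supplies the strict inequality when \(\eta_i>1\).

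\textbf{Why \(q=p+1\) fails.} Because there is no other slack, any loss is fatal, and \(q=p+1\) loses in two ways.
\begin{itemize}
\item For \(\eta_i\) close to \(1\), \(u^*=P_{r-1}(p,\beta)\) is smaller than \(P_{r-1}(p+1,\frac1{r-1})\). So the parameter \(\beta'\) with \(P_{r-1}(p+1,\beta')=u^*\) lies outside the range of Lemma~\ref{LEM:envelope}; for \(r=p=3\), \(\eta_i=1\) you get \(u^*=\frac14\) and \(\beta'=1/\sqrt3>\frac12\). Your fallback via monotonicity of \(\Phi_{r-1}\) then gives only an upper bound on \(\Phi_{r-1}(u^*)\), not a lower one.
\item Where \(\beta'\) does exist, the argument inside the proof of Lemma~\ref{LEM:envelope} (\(Z_{p+1}(x)<Z_p(x)\) for \(x<P_{r-1}(p,0)\)) shows \(P_r(p+1,\beta')<P_r(p,\beta)\) whenever \(\beta>0\). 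Hence your reduced inequality \((\rho^*)^rP_r(p+1,\beta')\ge M\mu_0(\eta_i-1)\) is equivalent to \(M^{1/(r-1)}\ge P_r(p,\beta)/P_r(p+1,\beta')>1\). It is therefore false once you drop \(M^{1/(r-1)}\). For example, with \(r=p=3\), \(\eta_i=\frac95\), \(\beta=\frac14\), \(\beta'=1/\sqrt6\), the ratio \(P_r(p+1,\beta')/P_r(p,\beta)\) is about \(0.79\).
\end{itemize}
Since \(M>1\) may be arbitrarily close to \(1\), no AM--GM or concavity estimate can repair this.

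\textbf{Two smaller omissions.}
\begin{itemize}
\item You must rule out \(\rho_i=0\) before forming \(\GG^{(i)}\). It forces \(\GG_i(K_{r-1})=\GG_i(K_r)=0\), hence \(\mu=0\) by \eqref{eq:vertex-general}, a contradiction.
\item The case \(\eta_i=1\) is closed by the identity above. It gives \(\GG_i(K_{r-1})/\tau_i^{r-1}=P_{r-1}(p,\frac1{r-1})\ge (r-1)^{-(r-1)}\). So \(\rho_i<\tau_i\) would force \(\Phi_{r-1}(\GG_i(K_{r-1})/\rho_i^{r-1})>0=\GG_i(K_r)/\rho_i^r\), contradicting the induction hypothesis.
\end{itemize}
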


\begin{proof}
Fix \(i\in [n]\). It suffices to show that \(\rho_{i}\ge \tau_{i}\), where we set
\begin{equation*}
        \tau_{i}\coloneqq\frac{p(1+\alpha)}{p+1}M^{1/(r-1)}\cdot \frac{\eta_i+r(r-2)}{r(r-1)}.
\end{equation*}
 
If \(\rho_i=0\), then \(\GG_i(K_{r-1})=\GG_i(K_r)=0\), and the equation \eqref{eq:vertex-general} forces \(\mu=0\), contradicting \(\mu>\mu_0>0\). 
Hence, we may assume \(\rho_i>0\).
Consider the weighted neighborhood graph \(\GG^{(i)}\). 
Applying the induction hypothesis to \(\GG^{(i)}\) together with~\eqref{eq:neighborhood-graph}, we obtain 
\begin{equation}
        \frac{\GG_i(K_r)}{\rho_{i}^r}= \GG^{(i)}(K_r)\ge \Phi_{r-1}(\GG^{(i)}(K_{r-1})) = 
        \Phi_{r-1}\left(\frac{\GG_i(K_{r-1})}{\rho_{i}^{r-1}}\right). \label{eq:neighborhood-inq}
\end{equation}
Now define
\begin{equation*}
        \beta\coloneqq\frac{r-\eta_{i}}{\eta_{i}+r(r-2)}.
\end{equation*}
By Claim~\ref{CLAIM:range-of-eta},
we have \(1\le \eta_{i}\le r\), and hence \(0\le\beta\le1/(r-1)\). 
The parameter \(\beta\) is introduced so that the quantities \(\tau_i, \GG_i(K_{r-1}), \GG_i(K_r)\) admit closed algebraic expressions in terms of the functions $P_{r-1}(p,\beta)$ and $P_r(p,\beta)$ defined in \eqref{equ:P_r}. 
More precisely, we have (see Appendix~\ref{APP:derivatives} for the derivation) 
\begin{equation}\label{eq:key-eq-clique-lifting}
        \frac{\GG_i(K_{r-1})}{\tau_{i}^{r-1}}
        =P_{r-1}(p,\beta)
        \quad \text{and} \quad 
        \frac{\GG_i(K_r)}{\tau_{i}^r}
        =M^{-1/(r-1)}P_r(p,\beta). 
\end{equation}

Now we split the remaining proof of this claim into two cases based on the value of \(\eta_{i}\). Note that $\eta_i\geq 1$.

\medskip

\noindent\underline{\textbf{The case when \(\eta_{i}>1\):}} In this case, we have \(\GG_i(K_r)=\mu(\eta_{i}-1)>0\). 
By \eqref{eq:key-eq-clique-lifting} and Lemma~\ref{LEM:envelope}, it follows that  
\begin{equation*}
        \Phi_{r-1}\left(\frac{\GG_i(K_{r-1})}{\tau_{i}^{r-1}}\right)
        =\Phi_{r-1}(P_{r-1}(p,\beta))
        \ge P_r(p,\beta)= M^{1/(r-1)}\frac{\GG_i(K_r)}{\tau_{i}^r}> \frac{\GG_i(K_r)}{\tau_{i}^r},
\end{equation*}
where the last strict inequality holds because $M>1$ and $\GG_i(K_r)>0$.
If \(\rho_{i}<\tau_{i}\), then \(\GG_i(K_{r-1})/\rho_{i}^{r-1}>\GG_i(K_{r-1})/\tau_{i}^{r-1}\). 
By Lemma~\ref{LEM:ratio}, the mapping \( u\longmapsto \frac{\Phi_{r-1}(u)}{u^{r/(r-1)}}\) is non-decreasing and thus we have
\[
        \rho_{i}^r\Phi_{r-1}\left(\frac{\GG_i(K_{r-1})}{\rho_{i}^{r-1}}\right)
        =\GG_i(K_{r-1})^{r/(r-1)}
        \frac{\Phi_{r-1}(\GG_i(K_{r-1})/\rho_{i}^{r-1})}{(\GG_i(K_{r-1})/\rho_{i}^{r-1})^{r/(r-1)}}
        \geq 
        \tau_{i}^r\Phi_{r-1}\left(\frac{\GG_i(K_{r-1})}{\tau_{i}^{r-1}}\right)> \GG_i(K_r).
\]
This contradicts~\eqref{eq:neighborhood-inq}. Therefore, indeed we have \(\rho_{i}\ge \tau_{i}\) when \(\eta_{i}>1\). 

\medskip

\noindent\underline{\textbf{The case when \(\eta_{i}=1\):}}  In this case, we have \(\GG_i(K_r)=0\) and \(\beta=1/(r-1)\).  If \(\rho_{i}<\tau_{i}\), then
\[
        \frac{\GG_i(K_{r-1})}{\rho_{i}^{r-1}}>\frac{\GG_i(K_{r-1})}{\tau_{i}^{r-1}}
        =P_{r-1}\left(p,\frac1{r-1}\right) \ge \frac{1}{(r-1)^{r-1}}, 
\]
where the equation holds by \eqref{eq:key-eq-clique-lifting}
and the last inequality holds since \(P_{r-1}\left(p,1/(r-1)\right)\) is non-decreasing in \(p\in [r,\infty)\) and it is equal to \(1/(r-1)^{r-1}\) at \(p=r\). 
Since $\Phi_{r-1}(\cdot)$ is strictly increasing on $\big[1/(r-1)^{r-1},1/(r-1)!\big)$, 
\[
    \Phi_{r-1}\left(\frac{\GG_i(K_{r-1})}{\rho_{i}^{r-1}}\right)>\Phi_{r-1}\left(\frac{1}{(r-1)^{r-1}}\right)=0 = \frac{\GG_i(K_r)}{\rho_{i}^{r}},
\]
which contradicts~\eqref{eq:neighborhood-inq} again.  
Therefore \(\rho_{i}\ge \tau_{i}\) also when \(\eta_{i}=1\). 
This completes the proof of Claim~\ref{CLAIM:local-general}.
\end{proof}

\medskip

We now finish the proof. Using the definition of \(\eta_{i}, \GG_i(K_{r-1}), F_{r}(\gamma), \mu_{0}\) and the equation~\eqref{eq:value-of-lambda}, 
\begin{align*}
         \sum_i x_i\eta_i
         =&\frac{\lambda}{\mu}\sum_i x_i \GG_i(K_{r-1})
         =\frac{r\lambda \GG(K_r)}{\mu}
         =\frac{r}{M}\cdot \lambda\cdot F_r(\gamma)\Big/\mu_0\\
         =& \frac{r}{M}\cdot \frac{(p-r+1)(1+\alpha)}{(r-1)(p+1)}\cdot \frac{\binom{p+1}{r}}{(p+1)^r}
        (1+\alpha)^{r-1}\bigl(1-(r-1)\alpha\bigr)\Big/\left( \frac{r+1}{r-1}
        \frac{\binom{p+1}{r+1}}{(p+1)^{r+1}}(1+\alpha)^r \right) \\
        =&\frac{r(1-(r-1)\alpha)}{M}, 
\end{align*}
where the last equation uses the identity \(\binom{p+1}{r+1}=\frac{p-r+1}{r+1}\binom{p+1}{r}\).
Multiplying \eqref{eq:lower-bound-of-degree} by \(x_i\) and summing over all $i\in [n]$, together with the above equation, we derive 
\begin{equation}\label{eq:K2-lower}
        2\GG(K_2)=\sum_{i}x_{i}\rho_{i}\ge
        \frac{p(1+\alpha)}{(p+1)(r-1)}
        \left((r-2)M^{1/(r-1)}+
        \frac{1-(r-1)\alpha}{M^{(r-2)/(r-1)}}\right).
\end{equation}
On the other hand, using the induction hypothesis repeatedly, we obtain 
\begin{align*}
    F_{r}^{-1}(\GG(K_r))\ge F_{r-1}^{-1}(\GG(K_{r-1}))\ge F_{r-2}^{-1}(\GG(K_{r-2}))
    \ge\cdots\ge F_{2}^{-1}(\GG(K_{2}))=\GG(K_2).
\end{align*}
Since \(\GG(K_r)=F_r(\gamma)\), it follows that 
\begin{equation}\label{eq:k2-upper}
        2\GG(K_2)\le 2\gamma=\frac{p}{p+1}(1-\alpha^2).
\end{equation}
Combining~\eqref{eq:K2-lower} and~\eqref{eq:k2-upper} and canceling the positive factor \(p(1+\alpha)/(p+1)\), we obtain that
\begin{equation}\label{equ:h(M)}
    h(M)\coloneqq (r-2)M^{1/(r-1)}+
        \frac{1-(r-1)\alpha}{M^{(r-2)/(r-1)}}
        \le (r-1)(1-\alpha). 
\end{equation}
Note that $M>1$, and 
the function $h(M)$ is strictly increasing for any $M>1$, since 
\[
        \frac{\dd h(M)}{\dd M}=\frac{r-2}{r-1}M^{-(r-2)/(r-1)}
        \left(1-\frac{1-(r-1)\alpha}{M}\right)
        >0 \quad \text{whenever} \quad M>1.
\]
Hence, we have \(h(M)>h(1)=(r-1)(1-\alpha)\), a contradiction to \eqref{equ:h(M)}. 
The proof of~\Cref{THM:clique-lifting} is complete.  
\qed

\section{Concluding Remarks}\label{SEC:concluding}
\noindent We determine the asymptotically sharp lower bound on the \(t\)-clique density \(p_t(G)\) in graphs \(G\) with a given \(s\)-clique density \(p_s(G)\), for all \(2\le s<t\). 
In the case \(s=2\), this provides a new proof of the clique density theorem of Reiher~\cite{reiher2016clique}. 
We conclude by showing how the proof of our main result can be adapted to obtain a stability result: if
\(
p_t(G)\le F_t\bigl(F_s^{-1}(p_s(G))\bigr)+o(1),
\)
then \(G\) must be close to a family of well-structured graphs.
We also discuss a related general problem.

\subsection{Stability of Theorem~\ref{THM:ordinary-main}}
\noindent
Stability versions of the clique density theorem have been studied extensively; see \cite{pikhurko2017asymptotic,liu2020exact} for the triangle density case and \cite{kim2020asymptotic} for the general clique density setting.
Let \(t\ge 3\) be an integer. 
To proceed, we need to first describe the family \(\mathcal{H}_{t,n}\) of \(n\)-vertex graphs introduced in~\cite{kim2020asymptotic}.
For \(\gamma=1/2\), let
\(\mathcal H_{\gamma,n}\coloneqq\{K_n\}\).  
For \(0\le\gamma<1/2\), let \(1\le p\in \mathbb{N}\) and \(c\in(1/(p+1),1/p]\) be the unique parameters satisfying that
\[
        \gamma=\frac12\Big(1-\big(pc^2+(1-pc)^2\big)\Big).
\]
Let \(V_1,\ldots,V_{p+1}\) be a partition of an \(n\)-vertex set with \(|V_1|=\cdots=|V_p|=\lfloor cn\rfloor\) and \(|V_{p+1}|=n-p\lfloor cn\rfloor\), and put \(U\coloneqq V_p\cup V_{p+1}\).  Let \(\mathcal H_{\gamma,n}\) be the family
of all graphs obtained from the complete \(p\)-partite graph with parts
\(V_1,\ldots,V_{p-1},U\) by adding inside \(U\) an arbitrary triangle-free
graph with exactly \(|V_p||V_{p+1}|\) edges.  
Finally, let \(\mathcal H_{t,n}\) be the union of \(\mathcal H_{\gamma,n}\) over all \(0\le \gamma\le 1/2\) and the family of \(n\)-vertex \(K_t\)-free graphs. 

Now we state and prove a stability version of Theorem~\ref{THM:ordinary-main}. It characterizes graphs asymptotically minimizing \(K_t\) for a given \(s\)-clique density: every such graph is close in edit distance to a graph in \(\mathcal{H}_{t,n}\).

\begin{theorem}\label{THM:ordinary-stability}
Let \(2\le s<t\). For every \(\varepsilon>0\), there are \(\delta>0\) and \(n_0\) such that the following
holds for every graph \(G\) on \(n\ge n_0\) vertices.  If
\[
        p_t(G)\le F_{t} \big ( F_{s}^{-1}(p_s(G)) \big ) +\delta,
\]
then \(G\) can be made isomorphic to some graph in \(\mathcal{H}_{t,n}\) by adding or deleting at most \(\varepsilon n^{2}\) edges. 
\end{theorem}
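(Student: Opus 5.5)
The plan is to reduce the statement to the known stability theorem for the clique density problem, namely the result of Kim, Liu, Pikhurko and Sharifzadeh~\cite{kim2020asymptotic}. That result says: for every \(\varepsilon>0\) there are \(\delta'>0\) and \(n_0\) such that every \(n\)-vertex graph with \(p_t(G)\le F_t(p_2(G))+\delta'\) is \(\varepsilon n^2\)-close to a graph in \(\mathcal H_{t,n}\). So it suffices to show that near-extremality for the pair \((s,t)\) forces near-extremality for the pair \((2,t)\). The tool for this is the chain of inequalities in Corollary~\ref{Coro:density}. Write \(y_r\coloneqq F_r^{-1}(p_r(G))\). Then \(p_2(G)=y_2\le y_3\le\cdots\le y_s\le\cdots\le y_t\).

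\textbf{Step 1: consequence of the hypothesis.} The hypothesis reads \(F_t(y_t)=p_t(G)\le F_t(y_s)+\delta\), at least when \(p_t(G)>0\), where \(F_t\circ F_t^{-1}\) is the identity.

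\textbf{Step 2: from \((s,t)\) to \((2,t)\).} The aim is
\[
p_t(G)\le F_t(p_2(G))+\delta'.
\]
This needs an upper bound on \(F_t(y_s)-F_t(y_2)\). Since \(F_t\) is nondecreasing, \(p_t(G)\ge F_t(y_s)\ge F_t(y_2)\), so near-equality in the hypothesis does not immediately give near-equality for \(s=2\). One has to show that \(y_s\) is close to \(y_2\), or else handle the case where it is not.

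\textbf{Step 3: the case \(y_s\) far from \(y_2\).} I would argue by compactness and graph limits. Suppose the statement fails. Then there is a sequence \(G_n\) with \(p_t(G_n)-F_t(F_s^{-1}(p_s(G_n)))\to0\) whose members stay \(\varepsilon n^2\)-far from \(\mathcal H_{t,n}\). Pass to a graphon limit \(W\), or a sequence of weighted graphs, with \(W(K_t)=F_t(F_s^{-1}(W(K_s)))\). Here continuity of \(F_t\circ F_s^{-1}\) is used. If \(F_t(y_s)=0\), then \(W(K_t)=0\), so \(W\) is \(K_t\)-free, and removal-lemma arguments put \(G_n\) close to a \(K_t\)-free graph, which lies in \(\mathcal H_{t,n}\).

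Otherwise \(y_s\) lies on the strictly increasing part of \(F_t\). Then equality propagates down the chain: equality \(F_t^{-1}(W(K_t))=F_s^{-1}(W(K_s))\) forces equality at every lifting step \(F_{r+1}^{-1}(W(K_{r+1}))=F_r^{-1}(W(K_r))\) for \(s\le r<t\). The main difficulty is the steps below \(s\), i.e.\ showing \(W(K_2)=F_s^{-1}(W(K_s))\). I would get this by tracing equality through the proof of Theorem~\ref{THM:clique-lifting}. A limit object attaining equality in the lifting from \(K_r\) to \(K_{r+1}\) should satisfy the Lagrange conditions with \(M=1\). In that case the argument of Claim~\ref{CLAIM:local-general} and \eqref{eq:K2-lower} and \eqref{eq:k2-upper} become equalities, which forces \(2W(K_2)=2\gamma\), that is, \(W(K_2)=F_r^{-1}(W(K_r))\). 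Making this rigorous for graphons (not finite weighted graphs) is where I expect the real work. I would first approximate by finite weighted graphs that are local optimizers, and redo the counterexample-to-optimizer reduction of Lemma~\ref{LEM:reduction} quantitatively, with \(M\) close to \(1\) in place of \(M>1\). The strict monotonicity of \(h\) then shows that the slack in \eqref{equ:h(M)} controls \(\gamma-\GG(K_2)\).

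\textbf{Step 4: conclusion.} With \(W(K_2)=F_s^{-1}(W(K_s))\) established, \(W(K_t)=F_t(W(K_2))\). Hence \(p_t(G_n)\le F_t(p_2(G_n))+o(1)\). The Kim--Liu--Pikhurko--Sharifzadeh stability theorem then places \(G_n\) within \(o(n^2)\) edit distance of \(\mathcal H_{t,n}\), which is a contradiction.
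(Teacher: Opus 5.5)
Your architecture is the paper's: pass to a limit graphon $W$ with exact equality, and use the removal lemma when $F_t(\gamma)=0$. Then propagate equality along $F_r^{-1}(p_r(W))$ for $s\le r\le t$, obtain $p_2(W)=F_s^{-1}(p_s(W))$ from equality in the $K_s\to K_{s+1}$ lifting via $M=1$, and finish with Kim--Liu--Pikhurko--Sharifzadeh. You cite their finite stability theorem where the paper cites their graphon characterization; that difference is immaterial.

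\textbf{The gap is the critical case} $p_s(W)=\theta_{p,s}$ with $p\ge s$. At such a point $\Phi_s$ is not differentiable: its left and right slopes are $\frac{p-s+1}{(s-1)(p+1)}$ and $\frac{p-s+2}{(s-1)(p+1)}$. So $\lambda=\Phi_s'(p_s(W))$, $\mu_0$ and $M$ are undefined, and your ``Lagrange conditions with $M=1$'' cannot even be stated. Tracing the proof of Theorem~\ref{THM:clique-lifting} does not help either. That proof never uses multipliers at critical points: Lemma~\ref{LEM:reduction} excludes them and Lemma~\ref{LEM:critical-values} handles them. The case cannot be avoided, because the Tur\'an graphons, which you must recognize as extremal, lie exactly there.

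The paper settles it with Lemma~\ref{LEM:quadratic-MM}. Suppose $p_r(W)=\theta_{p,r}$ and $p_{r+1}(W)=\theta_{p,r+1}$. Then $r^2p_r(W)^2\le p_{r-1}(W)\bigl(p_r(W)+(r^2-1)p_{r+1}(W)\bigr)$ forces $p_{r-1}(W)\ge\theta_{p,r-1}$. The lifting at level $r-1$ gives the reverse inequality, so equality holds there. Iterating down yields $p_2(W)=\theta_{p,2}=F_s^{-1}(p_s(W))$.

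Your plan for making the non-critical case rigorous also points the wrong way. Once you have passed to $W$ with exact equality, nothing quantitative remains to prove. The reduction of Lemma~\ref{LEM:reduction} would not help anyway. It replaces the given object by a compactness maximizer of $\Phi_r(\GG(K_r))-\GG(K_{r+1})$ over weighted graphs of a fixed order, and that maximizer is unrelated to $W$. Any identity $\GG(K_2)=\gamma$ you extract concerns the maximizer, not $p_2(W)$.

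The observation you need is that $W$ itself is a global maximizer of $\Phi_s(p_s(\cdot))-p_{s+1}(\cdot)$ over all graphons. This quantity is $\le 0$ by the graphon form of Theorem~\ref{THM:clique-lifting}, and it vanishes at $W$. Hence, when $p_s(W)$ is non-critical, you can run the following directly on $W$ with $M=1$:
\begin{itemize}
\item the vertex and edge first-order conditions;
\item Claims~\ref{CLAIM:range-of-eta} and~\ref{CLAIM:local-general}, applied to neighbourhood graphons;
\item the bound \eqref{eq:K2-lower}.
\end{itemize}
This is what Lemma~\ref{lem:rigidity} does.
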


In what follows, we show how the proof of Theorem~\ref{THM:ordinary-main} can be adapted to establish this result. We work in the language of graphons; for standard terminology, see the monograph of Lov\'asz~\cite{lovasz2012large}. Let \(\mathcal H_t\) denote the graphon analogue of \(\mathcal{H}_{t,n}\), viewed as a family of weak isomorphism classes of graphons.
We shall use the graphon versions of Theorem~\ref{THM:Main} (i.e., Theorem~\ref{THM:clique-lifting}) and Lemma~\ref{LEM:quadratic-MM}. These follow routinely from the standard step-graphon approximation argument, so we omit the details. In particular, every graphon \(W\) satisfies
\begin{equation}\label{eq:graphon-version} 
p_t(W)\ge F_t\bigl(F_s^{-1}(p_s(W))\bigr)\quad \text{for all} \quad t>s\ge 2.\footnote{Here and throughout, we let \(p_r(W)\coloneqq t(K_r,W)/r!\) for a graphon \(W\).}
\end{equation}
We also need the following lemma, which reduces the general clique-to-clique stability to the edge-to-clique case.

\begin{lemma}\label{lem:rigidity}
Let \(r\ge 2\) and \(W\) be a graphon. If \(p_{r+1}(W)=\Phi_r(p_r(W))>0\), then \(p_r(W)=F_r(p_2(W))\).
\end{lemma}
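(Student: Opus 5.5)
Given \(p_{r+1}(W)=\Phi_r(p_r(W))>0\), we want to show equality \(p_r(W)=F_r(p_2(W))\). The chain of inequalities from the graphon version of Theorem~\ref{THM:clique-lifting} gives \(F_r^{-1}(p_r(W))\ge F_2^{-1}(p_2(W))=p_2(W)\). Since \(F_r\) is non-decreasing, this yields \(p_r(W)\ge F_r(p_2(W))\). So it suffices to prove the reverse inequality \(F_r^{-1}(p_r(W))\le p_2(W)\).

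The plan is to rerun the minimal-counterexample argument of Section~\ref{SEC:MainProof} with \(M=1\) in place of \(M>1\), now for an extremal graphon rather than a counterexample. Set \(\gamma\coloneqq F_r^{-1}(p_r(W))\). Positivity of \(\Phi_r(p_r(W))\) gives \(\gamma>\frac{r-1}{2r}\).

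First we handle the critical points. If \(\gamma=\frac{p}{2(p+1)}\), then equality must hold in the chain used in Lemma~\ref{LEM:critical-values}, in particular in \(p_{r-1}(W)\le\theta_{p,r-1}\). This gives \(F_{r-1}^{-1}(p_{r-1}(W))=\gamma\), and we can induct downward on \(r\). Indeed, \(p_r(W)=\Phi_{r-1}(p_{r-1}(W))\) holds there, so the statement for \(r-1\) applies. For this reason the whole argument is organised as an induction on \(r\), with the base case \(r=2\) trivial.

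For non-critical \(\gamma\), the function \(\Phi_r\) is differentiable at \(p_r(W)\). Since \(W\) attains equality in the graphon version of~\eqref{equ:clique-lifting}, it is a global minimiser of \(p_{r+1}-\Phi_r(p_r)\). Vertex-perturbation (Lagrange) arguments in graphon form then give the analogues of \eqref{eq:vertex-general} and \eqref{eq:edge-general} for almost every \(x\in[0,1]\). This produces \(\mu=r\lambda p_r-(r+1)p_{r+1}=\mu_0\), so \(M=1\).

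Claim~\ref{CLAIM:range-of-eta} and Claim~\ref{CLAIM:local-general} go through verbatim with \(M=1\). The neighbourhood graphon at \(x\) replaces \(\GG^{(i)}\), and the graphon version of the induction hypothesis is used; the strict inequality from \(M>1\) is simply not needed for the non-strict bound \(\rho_x\ge\tau_x\). Integrating as in \eqref{eq:K2-lower} then gives
\[
2p_2(W)\ge \frac{p(1+\alpha)}{(p+1)(r-1)}\bigl(h(1)\bigr)=\frac{p}{p+1}(1-\alpha^2)=2\gamma,
\]
that is, \(p_2(W)\ge \gamma=F_r^{-1}(p_r(W))\). Combined with the opposite inequality, this gives \(p_r(W)=F_r(p_2(W))\).

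The main obstacle is justifying the first-order (Lagrange) conditions for a graphon minimiser. For weighted graphs these came from finite-dimensional calculus. For graphons, we need to perturb the measure on \([0,1]\), by moving mass between points, and to lower \(W\) on sets of positive measure where \(W>0\). This yields the conditions only almost everywhere, and the integrations must be checked to tolerate null sets. A cleaner alternative I would try first is to reduce to step graphons. A graphon attaining equality is a limit of weighted graphs \(\GG_k\) with defect \(f(\GG_k)\to0\). Running the argument for each \(\GG_k\) with \(M_k\to1\) would require approximate Lagrange conditions, which is messier. So I would instead argue directly on \(W\), using the global minimality, which makes one-sided directional derivatives nonnegative.
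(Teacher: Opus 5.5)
Your argument follows the paper's proof of this lemma. The critical case \(p_r(W)=\theta_{p,r}\) is handled the same way: Lemma~\ref{LEM:quadratic-MM} together with the lifting for \(r-1\) forces \(p_{r-1}(W)=\theta_{p,r-1}\), and one iterates downward. The non-critical case likewise reruns Section~\ref{SEC:MainProof} with \(M=1\) and uses \(h(1)=(r-1)(1-\alpha)\) to get \(p_2(W)\ge\gamma\). However, one step is wrong as you state it: Claim~\ref{CLAIM:local-general} does \emph{not} go through verbatim with \(M=1\), and the strictness you say is ``simply not needed'' is exactly what produces the contradiction.

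Look at the case \(\eta_i>1\), and write \(u_\rho=\GG_i(K_{r-1})/\rho_i^{r-1}\) and \(u_\tau=\GG_i(K_{r-1})/\tau_i^{r-1}\). Assuming \(\rho_i<\tau_i\), the paper derives \(\rho_i^r\Phi_{r-1}(u_\rho)\ge\tau_i^r\Phi_{r-1}(u_\tau)>\GG_i(K_r)\), and the only strict step is \(M^{1/(r-1)}>1\). With \(M=1\) and only the non-decreasing property of \(u\mapsto\Phi_{r-1}(u)/u^{r/(r-1)}\), you get \(\rho_i^r\Phi_{r-1}(u_\rho)\ge\GG_i(K_r)\). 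This is consistent with \eqref{eq:neighborhood-inq}, so there is no contradiction and \(\rho_i\ge\tau_i\) is not established.

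The repair is to use the \emph{strict} part of Lemma~\ref{LEM:ratio}. Since \(\eta_i>1\), we have \(\beta<1/(r-1)\), so \(\Phi_{r-1}(u_\tau)\ge P_r(p,\beta)>0\), and hence \(u_\tau>1/(r-1)^{r-1}\). Then \(\rho_i<\tau_i\) gives \(u_\rho>u_\tau\), both in the range where the ratio is strictly increasing. Therefore \(\rho_i^r\Phi_{r-1}(u_\rho)>\tau_i^r\Phi_{r-1}(u_\tau)\ge\GG_i(K_r)\), which contradicts \eqref{eq:neighborhood-inq}. The case \(\eta_i=1\) never used \(M>1\), so it is fine as is.

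The paper's one-line ``repeating the argument'' tacitly needs this same repair. With it, and granting the graphon first-order conditions, which the paper also takes for granted, your proof is complete.
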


\begin{proof}
The case \(r=2\) is immediate. Assume \(r\ge 3\) and \(p_{r+1}(W)=\Phi_r(p_r(W))>0\). 
If \(p_r(W)=\theta_{p,r}\) for some $p\ge r$, then \(p_{r+1}(W)=\Phi_r(\theta_{p,r})=\theta_{p,r+1}\). 
Using the graphon version of Lemma~\ref{LEM:quadratic-MM}, we obtain \(p_{r-1}(W)\ge \theta_{p,r-1}\). 
Applying \eqref{eq:graphon-version} with $(t,s)=(r,r-1)$, we obtain \(p_{r-1}(W)\le \theta_{p,r-1}\).
So \(p_{r-1}(W)=\theta_{p,r-1}\). 
Iterating this argument leads to \(p_2(W)=\theta_{p,2}\). Consequently, \(p_{r}(W)=\theta_{p,r}=F_r(\theta_{p,2})=F_r(p_2(W))\), as desired. 

Now suppose that \(p_r(W)\in (\theta_{p-1,r}, \theta_{p,r})\) for some integer \(p\ge r\). 
Let \(\gamma \coloneqq F_r^{-1}(p_r(W))\). Repeating the argument in the proof of Theorem~\ref{THM:clique-lifting}, with 
\(\lambda=\Phi_r'(p_r(W))\), gives
\[
\mu=r\lambda p_r(W)-(r+1)p_{r+1}(W)
   =r\lambda F_r(\gamma)-(r+1)F_{r+1}(\gamma)=\mu_0.
\]
Thus \(M=\mu/\mu_0=1\), and the lower bound~\eqref{eq:K2-lower} yields \(p_2(W)\ge \frac{p}{2(p+1)}(1-\alpha^2)=\gamma=F_r^{-1}(p_r(W))\). So $p_r(W)\le F_r(p_2(W))$. Applying~\eqref{eq:graphon-version} with \((t,s)=(r,2)\), we have \(p_r(W)=F_r(p_2(W))\), completing the proof. 
\end{proof}

With this lemma in hand, we are ready to present the proof of Theorem~\ref{THM:ordinary-stability}.

\begin{proof}[\bf Proof of Theorem~\ref{THM:ordinary-stability}.]
Fix \(t>s\ge 2\).
Suppose this fails. Then there exist \(\varepsilon>0\) and a sequence of graphs \(G_n\) with \(|V(G_n)|\to\infty\) such that
\(p_t(G_n) \le F_t\bigl(F_s^{-1}(p_s(G_n))\bigr)+o(1),
\)
yet each \(G_n\) is at edit distance at least \(\varepsilon|V(G_n)|^2\) from every graph in \(\mathcal H_{t,|V(G_n)|}\).
By passing to a subsequence if necessary, let \(W\) be the limit graphon of the sequence of graphs \(G_{n}\). Then we have \(p_t(W) \le F_t\bigl(F_s^{-1}(p_s(W))\bigr)\).
Together with~\eqref{eq:graphon-version}, we have 
\begin{equation}\label{equ:p(W)}
    p_t(W) = F_t\bigl(F_s^{-1}(p_s(W))\bigr).
\end{equation}

Let \(\gamma \coloneqq F_s^{-1}(p_s(W))\).
If \(F_t(\gamma)=0\), then \(p_t(W)=0\), and thus \(G_n\) contains \(o(|V(G_n)|^t)\) copies of \(K_t\).  
By the Graph Removal Lemma~\cite{erdos1986asymptotic}, \(G_n\) can be made \(K_t\)-free after deleting \(o(|V(G_n)|^2)\) edges.  Since all \(K_t\)-free graphs belong to \(\mathcal H_{t,|V(G_n)|}\), this is a contradiction.

Now we may assume that \(F_t(\gamma)>0\). By~\eqref{eq:graphon-version} and the monotonicity of $F_r$, we have 
\[
F_t^{-1}(p_t(W))\ge F_{t-1}^{-1}(p_{t-1}(W))\ge\cdots\ge F_{s+1}^{-1}(p_{s+1}(W))\ge F_s^{-1}(p_s(W)).
\]
By \eqref{equ:p(W)}, we have \(F_t^{-1}(p_t(W))=F_s^{-1}(p_s(W))=\gamma>0\).
Thus every intermediate inequality above must be an equality. In particular, \(p_{s+1}(W)=\Phi_s(p_s(W))>0\).  
Using Lemma~\ref{lem:rigidity}, we have \(p_s(W)=F_s(p_2(W))\) and thus \(p_t(W)=F_t\bigl(F_s^{-1}(p_s(W))\bigr)=F_t(p_2(W)).\)
We now use the characterization of extremal graphons for the clique density theorem: \cite[Theorem~2.1]{pikhurko2017asymptotic} (for \(t=3\)) and \cite[Theorem~1.6]{kim2020asymptotic} (for \(t\ge 3\)) show that
\[
p_t(W)=F_t(p_2(W))
\iff
[W]\in\mathcal H_t
\quad \text{for~} t\ge 3,
\]
where \([W]\) denotes the weak isomorphism class of \(W\). It follows from \([W]\in \mathcal{H}_t\) that there exist graphs \(H_n\in\mathcal H_{t,|V(G_n)|}\) such that, up to relabeling,
\(|E(G_n)\triangle E(H_n)|=o\bigl(|V(G_n)|^2\bigr).\)\footnote{For this implication, we refer the reader to the proof of \cite[Theorem~1.2]{kim2020asymptotic}.}
This is a contradiction.
\end{proof}

\subsection{A General Problem}

\noindent
For a fixed graph \(F\), let \(k_F(G)\) denote the number of unlabeled copies of \(F\) in a graph \(G\), and let $p_F(G)\coloneqq\frac{k_F(G)}{|V(G)|^{|V(F)|}}$ denote the {\it\(F\)-density} of \(G\).
For two fixed graphs \(F\) and \(H\), define
\[
        \varphi_{F,H}(x)
        \coloneqq
        \inf \left\{
        \liminf_{n\to\infty} p_H(G_n):
        |V(G_n)|\to\infty,\ 
        \liminf_{n\to\infty} p_F(G_n)\ge x
        \right\},
\]
whenever the set on the right-hand side is non-empty. Thus \(\varphi_{F,H}(x)\) is the asymptotically smallest possible \(H\)-density among graphs whose \(F\)-density is at least \(x\). Evidently, \(\varphi_{F,H}\) is non-decreasing.

In the case of cliques, Theorem~\ref{THM:ordinary-main} determines this function explicitly. For \(2\le s<t\), it gives
\[
        \varphi_{K_s,K_t}(x)
        =
        F_t(F_s^{-1}(x)).
\]
The domain of this function is naturally divided into intervals by the critical points \(\theta_{p,s}\), and on each such interval the function is concave (see Figure~\ref{fig:Curve}). This suggests the following general problem.

\begin{problem}\label{PROB:general-pairs}
	Determine all pairs \((F,H)\) of graphs for which the function
	\(\varphi_{F,H}\) is piecewise concave.
\end{problem}

Here ``piecewise concave'' means that the domain of \(\varphi_{F,H}\) can be divided into finitely or countably many intervals such that the restriction of \(\varphi_{F,H}\) to each interval is concave. The above problem appears to be quite far-reaching. A natural first step is to consider the case \(F=K_2\), and to determine for which graphs \(H\) the edge-to-\(H\) function \(\varphi_{K_2,H}\) is piecewise concave.

\section*{Acknowledgments}
\noindent J.M. was supported by the National Key Research and Development Program of China (2023YFA1010201) and the National Natural Science Foundation of China grant 12125106.
T.W. and T.Z. were supported by the Innovation Program for Quantum Science and Technology (2021ZD0302902).

\section*{Declaration on the Use of AI} 
\noindent The main result of this paper, namely the sharp \(K_s\to K_t\) statement for all \(2\le s<t\), was conceived and formulated by the authors. During the exploratory stage of this work, AI-assisted tools were used to generate candidate proof ideas and assist with calculations. In particular, these tools helped the authors obtain an initial derivation of the \(K_3 \to K_4\) result on the first non-trivial interval \(1/27 \le p_3(G) \le 1/16\). 
Building on this initial derivation, the authors developed the mathematical framework of the paper and established the results presented herein. 
AI-assisted tools were also used for formula checking and proofreading.
All AI-assisted outputs were independently checked and verified by the authors, who take full responsibility for the content of this work.

\bibliographystyle{plain}
\bibliography{references}

@article{bollobas1976complete,
  author  = {Bollob{\'a}s, B{\'e}la},
  title   = {On complete subgraphs of different orders},
  journal = {Mathematical Proceedings of the Cambridge Philosophical Society},
  volume  = {79},
  number  = {1},
  pages   = {19--24},
  year    = {1976},
  doi     = {10.1017/S0305004100052063}
}

@article{erdos1986asymptotic,
  author  = {Erd{\H{o}}s, Paul and Frankl, P{\'e}ter and R{\"o}dl, Vojt{\v{e}}ch},
  title   = {The asymptotic number of graphs not containing a fixed subgraph
             and a problem for hypergraphs having no exponent},
  journal = {Graphs and Combinatorics},
  volume  = {2},
  pages   = {113--121},
  year    = {1986},
  doi     = {10.1007/BF01788085}
}

@article{fisher1989lower,
  author  = {Fisher, David Charles},
  title   = {Lower bounds on the number of triangles in a graph},
  journal = {Journal of Graph Theory},
  volume  = {13},
  number  = {4},
  pages   = {505--512},
  year    = {1989},
  doi     = {10.1002/jgt.3190130411}
}

@article{goodman1959sets,
  author  = {Goodman, Adolph Winkler},
  title   = {On sets of acquaintances and strangers at any party},
  journal = {The American Mathematical Monthly},
  volume  = {66},
  number  = {9},
  pages   = {778--783},
  year    = {1959},
  doi     = {10.1080/00029890.1959.11989408}
}

@article{kim2020asymptotic,
  author  = {Kim, Jaehoon and Liu, Hong and Pikhurko, Oleg and Sharifzadeh, Maryam},
  title   = {Asymptotic structure for the clique density theorem},
  journal = {Discrete Analysis},
  year    = {2020},
  pages   = {Paper No. 19, 26 pp.},
  doi     = {10.19086/da.18559}
}

@book{lovasz2012large,
  author    = {Lov{\'a}sz, L{\'a}szl{\'o}},
  title     = {Large Networks and Graph Limits},
  series    = {American Mathematical Society Colloquium Publications},
  volume    = {60},
  publisher = {American Mathematical Society},
  address   = {Providence, RI},
  year      = {2012},
  isbn      = {978-0-8218-9085-1}
}

@incollection{lovasz1983number,
  author    = {Lov{\'a}sz, L{\'a}szl{\'o} and Simonovits, Mikl{\'o}s},
  title     = {On the number of complete subgraphs of a graph. {II}},
  booktitle = {Studies in Pure Mathematics},
  pages     = {459--495},
  publisher = {Birkh{\"a}user},
  address   = {Basel},
  year      = {1983}
}

@article{moon1962problem,
  author  = {Moon, John W. and Moser, Leo},
  title   = {On a problem of {T}ur{\'a}n},
  journal = {Magyar Tud. Akad. Mat. Kutat{\'o} Int. K{\"o}zl.},
  volume  = {7},
  pages   = {283--286},
  year    = {1962}
}

@article{nikiforov2011number,
  author  = {Nikiforov, Vladimir},
  title   = {The number of cliques in graphs of given order and size},
  journal = {Transactions of the American Mathematical Society},
  volume  = {363},
  number  = {3},
  pages   = {1599--1618},
  year    = {2011},
  doi     = {10.1090/S0002-9947-2010-05189-X}
}

@article{nordhaus1963triangles,
  author  = {Nordhaus, Edward A. and Stewart, Bonnie Madison},
  title   = {Triangles in an ordinary graph},
  journal = {Canadian Journal of Mathematics},
  volume  = {15},
  pages   = {33--41},
  year    = {1963},
  doi     = {10.4153/CJM-1963-004-7}
}

@article{pikhurko2017asymptotic,
  author  = {Pikhurko, Oleg and Razborov, Alexander A.},
  title   = {Asymptotic structure of graphs with the minimum number of triangles},
  journal = {Combinatorics, Probability and Computing},
  volume  = {26},
  number  = {1},
  pages   = {138--160},
  year    = {2017},
  doi     = {10.1017/S0963548316000110}
}

@article{razborov2007flag,
  author  = {Razborov, Alexander A.},
  title   = {Flag algebras},
  journal = {Journal of Symbolic Logic},
  volume  = {72},
  number  = {4},
  pages   = {1239--1282},
  year    = {2007},
  doi     = {10.2178/jsl/1203350785}
}

@article{razborov2008minimal,
  author  = {Razborov, Alexander A.},
  title   = {On the minimal density of triangles in graphs},
  journal = {Combinatorics, Probability and Computing},
  volume  = {17},
  number  = {4},
  pages   = {603--618},
  year    = {2008},
  doi     = {10.1017/S0963548308009085}
}

@article{reiher2016clique,
  author  = {Reiher, Christian},
  title   = {The clique density theorem},
  journal = {Annals of Mathematics},
  series  = {2},
  volume  = {184},
  number  = {3},
  pages   = {683--707},
  year    = {2016},
  doi     = {10.4007/annals.2016.184.3.1}
}

@article{turan1941extremal,
  author  = {Tur{\'a}n, P{\'a}l},
  title   = {Eine Extremalaufgabe aus der Graphentheorie},
  journal = {Matematikai {\'e}s Fizikai Lapok},
  volume  = {48},
  pages   = {436--452},
  year    = {1941}
}

@article{erdos1962number,
    author = {Paul Erd\H{o}s},
    title = {On the Number of Complete Subgraphs Contained in Certain Graphs},
    journal = {Magyar Tud. Akad. Mat. Kutat{\'o} Int. K{\"o}zl.},
    volume  = {7},
    number = {3},
    pages   = {459--464},
    year ={1962} 
}

@article{liu2020exact,
  author  = {Liu, Hong and Pikhurko, Oleg and Staden, Katherine},
  title   = {The exact minimum number of triangles in graphs with given order and size},
  journal = {Forum of Mathematics, Pi},
  volume  = {8},
  pages   = {e8},
  year    = {2020},
  doi     = {10.1017/fmp.2020.7}
}

@article{alon2016,
  author  = {Alon, Noga and Shikhelman, Clara},
  title   = {Many {T} copies in {H}-free graphs},
  journal = {Journal of Combinatorial Theory, Series B},
  volume  = {121},
  pages   = {146--172},
  year    = {2016},
  doi     = {10.1016/j.jctb.2016.03.004}
}

@article{Khadzhiivanov1978,
  author  = {Khad\v{z}iivanov, Nikolai G. and Nikiforov, Vladimir S.},
  title   = {The {Nordhaus--Stewart--Moon--Moser} inequality},
  journal = {Serdica},
  volume  = {4},
  number  = {4},
  pages   = {344--350},
  year    = {1978},
  note    = {In Russian}
}

@article{zykov1949linear,
  author   = {Zykov, Aleksandr A.},
  title    = {On some properties of linear complexes},
  journal  = {Matematicheskii Sbornik. Novaya Seriya},
  volume   = {24(66)},
  number   = {2},
  pages    = {163--188},
  year     = {1949},
  language = {Russian}
}

\appendix

\section{Proof of Lemma~\ref{LEM:quadratic-MM}}\label{APP:quadratic}
\noindent We include a short proof of Lemma~\ref{LEM:quadratic-MM}, adapted from the proof of Proposition 3.1 in Reiher~\cite{reiher2016clique}. 
The argument may be viewed as a weighted double-counting of the pairs $(L,\{u,v\})$, where $L\subseteq V(G)$ has size $r-1$ and $u,v\in V(G)\setminus L$ (not necessarily distinct) are such that both $L\cup \{u\}$ and $L\cup \{v\}$ induce copies of $K_r$ in $G$.

\begin{proof}[\bf Proof of Lemma~\ref{LEM:quadratic-MM}]
Let \(\GG=\GG(\vec{\mathbf{x}}, \bA)\) be a weighted graph of order \(n\).  For \(S\subseteq[n]\) write
\[
        A_S\coloneqq\prod_{e\in \binom{S}{2}}a_e
        \quad\text{and}\quad
        X_S\coloneqq\prod_{i\in S}x_i.
\]
Fix an integer \(r\ge2\). For each \(M\in \binom{[n]}{r+1}\), define
\[
        B_M\coloneqq\sum_{e\in \binom{M}{2}}\prod_{f\in \binom{M}{2}\setminus\{e\}}a_f,
        \quad \text{and}\quad
        C_M\coloneqq\sum_{N\in \binom{M}{r}}A_N.
\]
We first claim that
\begin{equation}\label{eq:appendix-B-C}
        2B_M-C_M\le (r^2-1)A_M.
\end{equation}
Both sides are multilinear in the variables \(a_e\) corresponding to the pairs \(e\subseteq M\). Therefore, it suffices to verify \eqref{eq:appendix-B-C} when each \(a_e\in\{0,1\}\).
If at least two pairs in \(M\) have weight \(0\), then \(A_M=B_M=0\) and \eqref{eq:appendix-B-C} is immediate. If exactly one pair has weight \(0\), then \(A_M=0\) and \(B_M=1\). Moreover, the only \(r\)-subsets that form cliques are the two obtained by deleting one of the vertices of that pair; hence \(C_M=2\).
Finally, if no pair has weight \(0\), then \(A_M=1\), \(B_M=\binom{r+1}{2}\), and \(C_M=r+1\). Therefore,
\(2B_M-C_M=2\binom{r+1}{2}-(r+1)=r^2-1.\)
This proves \eqref{eq:appendix-B-C}. 
 
Multiplying \eqref{eq:appendix-B-C} by \(X_M\) and
summing over all \(M\in\binom{[n]}{r+1}\), we get
\begin{equation}\label{eq:appendix-sumBC}
        \sum_{M\in\binom{[n]}{r+1}}(2B_M-C_M)X_M
        \le (r^2-1)\GG(K_{r+1}).
\end{equation}

For each \(L\in\binom{[n]}{r-1}\), we define
\[
        \eta_L\coloneqq\sum_{i\in[n]\setminus L} \left(x_i\prod_{\ell\in L}a_{i\ell}\right)
        \quad \text{and} \quad
        \Omega\coloneqq\sum_{L\in\binom{[n]}{r-1}}A_LX_L\eta_L^2.
\]
Expanding \(\eta_L^2\), we write $\Omega=\Omega_{1}+\Omega_{2}$, where  
\begin{align*}
\Omega_{1}
&\coloneqq \sum_{L\in \binom{[n]}{r-1}} A_LX_L \sum_{i\neq j, \{i,j\}\cap L=\emptyset} \left(x_i\prod_{\ell\in L}a_{i\ell}\right) \left(x_j\prod_{\ell\in L}a_{j\ell}\right)= 2\sum_{M\in\binom{[n]}{r+1}}B_MX_M,\\
\Omega_{2}
&\coloneqq\sum_{L\in\binom{[n]}{r-1}}A_LX_L
  \sum_{i\in[n]\setminus L}x_i^2\prod_{\ell\in L}a_{i\ell}^2  \le
  \sum_{L\in\binom{[n]}{r-1}}A_LX_L
  \sum_{i\in[n]\setminus L}x_i^2\prod_{\ell\in L}a_{i\ell} \\
&=\sum_{Q\in\binom{[n]}{r}}A_QX_Q\sum_{i\in Q}x_i
 =\sum_{Q\in\binom{[n]}{r}}A_QX_Q\left(1-\sum_{i\notin Q}x_i\right)
 =\GG(K_r)-\sum_{M\in\binom{[n]}{r+1}}C_MX_M.
\end{align*}
Then using \eqref{eq:appendix-sumBC}, we have
\[
        \Omega
        \le \GG(K_r)+\sum_{M\in\binom{[n]}{r+1}}(2B_M-C_M)X_M
        \le \GG(K_r)+(r^2-1)\GG(K_{r+1}).
\]
Finally, by Cauchy's inequality, we derive 
\[
\big(r\GG(K_r)\big)^2=\left(\sum_{L\in\binom{[n]}{r-1}}A_LX_L\eta_L\right)^2
\le
\left(\sum_{L\in\binom{[n]}{r-1}}A_LX_L\right)
\left(\sum_{L\in\binom{[n]}{r-1}}A_LX_L\eta_L^2\right)=\GG(K_{r-1})\cdot \Omega.
\]
Combining the two inequalities above, we obtain
\[
r^2\GG(K_r)^2\le \GG(K_{r-1})\cdot \Omega\le \GG(K_{r-1})\bigl(\GG(K_r)+(r^2-1)\GG(K_{r+1})\bigr).
\]
This proves the lemma. 
\end{proof}

\section{Supplementary Calculations}\label{APP:derivatives}

\noindent
This appendix supplies the detailed calculations that are stated without proof in the main text. 

\vspace{1em}

\noindent
\textbf{\underline{Verification for Equation~\eqref{eq:log-derivative}.}} 
Let $\widetilde{r}\in \{r-1,r\}$. From the definition of \(F_{\widetilde{r}}(\gamma)\), taking logarithms leads to
  \begin{align*}
      \log F_{\widetilde{r}}(\gamma)
      &= ({\widetilde{r}}-1)\log(1+\alpha) + \log\bigl(1-({\widetilde{r}}-1)\alpha\bigr) + f(p,{\widetilde{r}}),
  \end{align*}
where \(f(p,{\widetilde{r}})\) denotes a function only depending on $p$ and ${\widetilde{r}}$.  
Since $0<\alpha<1/p\leq 1/(r-1)$, we obtain
\begin{align*}
     \frac{\dd}{\dd\alpha}
      \log\left(\frac{F_r(\gamma)}{F_{r-1}(\gamma)^{r/(r-1)}}\right)=
      &\frac{r-1}{1+\alpha} - \frac{r-1}{1-(r-1)\alpha}
      - \frac{r}{r-1}\Bigl(\frac{r-2}{1+\alpha} - \frac{r-2}{1-(r-2)\alpha}\Bigr)  \\
      =&\ \frac{1}{1+\alpha}\cdot\frac{1}{r-1}
        - \frac{r-1}{1-(r-1)\alpha}
        + \frac{r(r-2)}{(r-1)(1-(r-2)\alpha)} \\
      =&-\frac{r\alpha}{(1+\alpha)(1-(r-2)\alpha)(1-(r-1)\alpha)}<0.
\end{align*}
This verifies Equation \eqref{eq:log-derivative}.\qed

\vspace{1em}

\noindent
\textbf{\underline{Verification for Equation~\eqref{eq:slope}.}} 
Let $\widetilde{q}\in \{q-1,q\}$.
Recall that \(Z_{\widetilde{q}}(x)\) is defined by writing \(P_{r+1}(\widetilde{q},\beta)\) as a function of \(x = P_r(\widetilde{q},\beta)\). 
First, by the definition of \(P_{r}(\widetilde{q},\beta)\), we have 
  \begin{align*}
      \frac{\dd}{\dd\beta}P_r(\widetilde{q},\beta)
      = \frac{\binom{\widetilde{q}}{r}}{(\widetilde{q})^r}
         \Bigl[(r-1)(1+\beta)^{r-2}\bigl(1-(r-1)\beta\bigr)
               + (1+\beta)^{r-1}\bigl(-(r-1)\bigr)\Bigr] 
      = -\frac{\binom{\widetilde{q}}{r}}{(\widetilde{q})^r}
         \cdot r(r-1)\beta(1+\beta)^{r-2}.
  \end{align*}
Similarly, we obtain
\(\frac{\dd}{\dd\beta}P_{r+1}(\widetilde{q},\beta) = -\frac{\binom{\widetilde{q}}{r+1}}{(\widetilde{q})^{r+1}}
         \cdot (r+1)r\beta(1+\beta)^{r-1}.\)
Then using the chain rule, we have
  \[
      Z_{\widetilde{q}}'(x)
      =\frac{\dd P_{r+1}(\widetilde{q},\beta)}
             {\dd P_r(\widetilde{q},\beta)} =\frac{\dd P_{r+1}(\widetilde{q},\beta)/\dd\beta}
             {\dd P_r(\widetilde{q},\beta)/\dd\beta}
      = \frac{\binom{\widetilde{q}}{r+1}}{\binom{\widetilde{q}}{r}}\cdot\frac{1}{\widetilde{q}}\cdot\frac{r+1}{r-1}\cdot(1+\beta)
      = \frac{\widetilde{q}-r}{\widetilde{q}(r-1)}(1+\beta),
  \]
  as claimed in Equation~\eqref{eq:slope}.
\qed

\vspace{1em}

\noindent
\textbf{\underline{Verification for Equation~\eqref{eq:value-of-lambda}.}} 
By the chain rule and the formula for the derivative of an inverse function,
\begin{equation*}
      \lambda=\Phi_r'(x_0)
      = F_{r+1}'\bigl(F_r^{-1}(x_0)\bigr)\cdot (F_r^{-1})'(x_0)
      = \frac{F_{r+1}'(\gamma)}{F_r'(\gamma)} = \frac{\dd F_{r+1}/\dd\alpha}{\dd\gamma/\dd\alpha}\big/ \frac{\dd F_r/\dd\alpha}{\dd\gamma/\dd\alpha}=\frac{\dd F_{r+1}/\dd \alpha}{\dd F_{r}/\dd\alpha}.
\end{equation*}
For $\widetilde{r}\in \{r,r+1\}$,  it is straightforward to compute the derivative 
\(\frac{\dd}{\dd\alpha}\Bigl[(1+\alpha)^{\widetilde{r}-1}\bigl(1-(\widetilde{r}-1)\alpha\bigr)\Bigr]\) 
as
\begin{align*}
   (\widetilde{r}-1)(1+\alpha)^{\widetilde{r}-2}\bigl(1-(\widetilde{r}-1)\alpha\bigr)
         + (1+\alpha)^{\widetilde{r}-1}\bigl(-(\widetilde{r}-1)\bigr) = -\widetilde{r}(\widetilde{r}-1)\alpha(1+\alpha)^{\widetilde{r}-2}, 
\end{align*}
which implies that
\[
      \frac{\dd F_{\widetilde{r}}}{\dd \alpha}= \frac{\binom{p+1}{\widetilde{r}}}{(p+1)^{\widetilde{r}}}
      \cdot \frac{\dd}{\dd \alpha}\Bigl[(1+\alpha)^{\widetilde{r}-1}\bigl(1-(\widetilde{r}-1)\alpha\bigr)\Bigr]
      = -\frac{\binom{p+1}{\widetilde{r}}}{(p+1)^{\widetilde{r}}}\cdot \widetilde{r}(\widetilde{r}-1)\alpha(1+\alpha)^{\widetilde{r}-2}.
\]
Using the identity \(\binom{p+1}{r+1}/\binom{p+1}{r} = (p+1-r)/(r+1)\), we have
\begin{align*}
      \lambda 
      = \frac{\dd F_{r+1}/\dd\alpha}{\dd F_r/\dd\alpha} 
      = \frac{\binom{p+1}{r+1}}{\binom{p+1}{r}}\cdot\frac{1}{p+1}\cdot\frac{r+1}{r-1}\cdot(1+\alpha)
      = \frac{p-r+1}{(r-1)(p+1)}(1+\alpha).
\end{align*}
This completes the calculation of \(\lambda\) and verifies Equation~\eqref{eq:value-of-lambda}.
\qed

\vspace{1em}

\noindent
\textbf{\underline{Verification for Equation~\eqref{eq:key-eq-clique-lifting}.}}
Fix \(i\in [n]\). Throughout this proof, we let 
\[
L_{i}\coloneqq \frac{\eta_{i}+r(r-2)}{r(r-1)}. 
\]
It follows directly from the definition of \(\beta = \frac{r-\eta_i}{\eta_i+r(r-2)}\) and \(L_{i}\) that 
\begin{equation}\label{eq:app-beta-L}
      1+\beta = \frac{1}{L_i}, \quad
      1-(r-2)\beta = \frac{\eta_i}{rL_i} \quad\text{and}\quad 
      1-(r-1)\beta = \frac{\eta_i-1}{(r-1)L_i}. 
  \end{equation}
Before we verify the desired identities, let us recall all the relevant definitions and formulas as follows:
  \[
      \GG_i(K_{r-1}) = \frac{\mu\eta_i}{\lambda}, \quad \GG_i(K_r) = \mu(\eta_i-1),\quad 
      \tau_i = \frac{p(1+\alpha)}{p+1}M^{1/(r-1)}\cdot \frac{\eta_{i}+r(r-2)}{r(r-1)}=\frac{p(1+\alpha)}{p+1}M^{1/(r-1)}L_i,
  \]
  \[
      \mu = M\mu_0,\quad\mu_0 = \frac{r+1}{r-1}\frac{\binom{p+1}{r+1}}{(p+1)^{r+1}}(1+\alpha)^r,
      \quad \mbox{and} \quad
      \lambda = \frac{p-r+1}{(r-1)(p+1)}(1+\alpha). 
  \]
  
  Now we verify the first identity.
  We first compute the ratio \(\mu_0/\lambda\) as
  \[
  \frac{\mu_0}{\lambda}
  = \frac{\frac{r+1}{r-1}\frac{\binom{p+1}{r+1}}{(p+1)^{r+1}}(1+\alpha)^r}
         {\frac{p-r+1}{(r-1)(p+1)}(1+\alpha)}
  = \frac{r+1}{p-r+1}\cdot\frac{\binom{p+1}{r+1}}{(p+1)^r}\cdot(1+\alpha)^{r-1}
  = \frac{\binom{p+1}{r}}{(p+1)^r}(1+\alpha)^{r-1}.
  \]
  Using~\eqref{eq:app-beta-L}, we obtain
  \[
  \frac{\eta_i}{rL_i^{\,r-1}}
  = \frac{\eta_i}{rL_i}\cdot\frac{1}{L_i^{\,r-2}}
  = \bigl(1-(r-2)\beta\bigr)(1+\beta)^{r-2}.
  \]
  Using the above expressions, we derive the first identity as follows:
  \begin{align*}
  \frac{\GG_i(K_{r-1})}{\tau_i^{r-1}}
  &= \frac{\mu\eta_i/\lambda}
         {\bigl(\frac{p(1+\alpha)}{p+1}M^{1/(r-1)}L_i\bigr)^{r-1}}
  = \frac{\mu_0}{\lambda}\cdot \frac{\eta_i}{rL_i^{r-1}}\cdot
    \frac{r(p+1)^{r-1}}{p^{r-1}(1+\alpha)^{r-1}}\\
  &= \frac{\binom{p+1}{r}}{(p+1)^r}(1+\alpha)^{r-1}\cdot \bigl(1-(r-2)\beta\bigr)(1+\beta)^{r-2}\cdot \frac{r(p+1)^{r-1}}{p^{r-1}(1+\alpha)^{r-1}}\\
  &= \frac{\binom{p}{r-1}}{p^{r-1}}
     (1+\beta)^{r-2}\bigl(1-(r-2)\beta\bigr)
  = P_{r-1}(p,\beta),
  \end{align*}
  where the second equation follows by $\mu=M\mu_0$ and the second last equation holds because \(r\binom{p+1}{r}=(p+1)\binom{p}{r-1}\). 

  Next we verify the second identity by similar computations.
  Using~\eqref{eq:app-beta-L}, \(\eta_i-1 = (r-1)L_i\bigl(1-(r-1)\beta\bigr)\), so
  \[
  \frac{\eta_i-1}{L_i^{\,r}}
  = \frac{(r-1)(1-(r-1)\beta)}{L_i^{\,r-1}}
  = (r-1)(1-(r-1)\beta)(1+\beta)^{r-1}.
  \]
  This, together with \(\GG_i(K_r) = \mu(\eta_i-1)=M\mu_0(\eta_i-1)\), implies that
  \begin{align*}
  \frac{\GG_i(K_r)}{\tau_i^r}
  &= \frac{M\mu_0(\eta_i-1)}
         {\bigl(\frac{p(1+\alpha)}{p+1}M^{1/(r-1)}L_i\bigr)^r}
  = M^{-1/(r-1)}\cdot\mu_0\cdot \frac{\eta_i-1}{L_i^r}\cdot
    \frac{(p+1)^r}{p^r(1+\alpha)^r}\\
  &= M^{-1/(r-1)}\cdot\frac{r+1}{r-1}\frac{\binom{p+1}{r+1}}{(p+1)^{r+1}}(1+\alpha)^r
    \cdot (r-1)(1-(r-1)\beta)(1+\beta)^{r-1} \cdot \frac{(p+1)^r}{p^r(1+\alpha)^r}\\
  & = M^{-1/(r-1)}\cdot\frac{\binom{p}{r}}{p^r}
      (1+\beta)^{r-1} (1-(r-1)\beta)
      = M^{-1/(r-1)}\cdot P_r(p,\beta),
  \end{align*}
 where the second last equation holds because \((r+1)\binom{p+1}{r+1}=(p+1)\binom{p}{r}\).
 This proves the second identity and completes the verification.
\qed

\end{document}